\documentclass{amsart}
\usepackage{amssymb,mathtools}
\usepackage[british]{babel}
\usepackage{enumitem}
\usepackage[latin1]{inputenc}

\usepackage{url}

\newtheorem{theorem}{Theorem}
\numberwithin{theorem}{section}
\newtheorem{corollary}[theorem]{Corollary}
\newtheorem{lemma}[theorem]{Lemma}
\newtheorem{proposition}[theorem]{Proposition}
\theoremstyle{definition}
\newtheorem{definition}[theorem]{Definition}
\newtheorem{remark}[theorem]{Remark}

\numberwithin{equation}{section}

\newcommand{\bqo}{\mathsf{BQO}}
\newcommand{\delbqo}{\Delta^0_2\text{-}\mathsf{BQO}}
\newcommand{\aca}{\mathsf{ACA}}
\newcommand{\rca}{\mathsf{RCA}}
\newcommand{\zfc}{\mathsf{ZFC}}
\newcommand{\hth}{\operatorname{ht}}

\newcommand{\len}{\operatorname{lh}}
\newcommand{\tr}{\operatorname{Tr}}
\newcommand{\supp}{\operatorname{supp}}
\newcommand{\tl}{\vartriangleleft}
\newcommand{\three}{\mathbf 3}
\newcommand{\one}{\mathbf 1}
\newcommand{\tc}{\operatorname{TC}(\Omega\oplus\Omega)}
\newcommand{\tcleq}{\leq_{\operatorname{TC}}}
\newcommand{\tcle}{<_{\operatorname{TC}}}

\makeatletter
\newcommand{\doverline}[1]{\overline{\dbl@overline{#1}}}
\newcommand{\dbl@overline}[1]{\mathpalette\dbl@@overline{#1}}
\newcommand{\dbl@@overline}[2]{%
  \begingroup
  \sbox\z@{$\m@th#1\overline{#2}$}%
  \ht\z@=\dimexpr\ht\z@-2\dbl@adjust{#1}\relax
  \box\z@
  \ifx#1\scriptstyle\kern-\scriptspace\else
  \ifx#1\scriptscriptstyle\kern-\scriptspace\fi\fi
  \endgroup
}
\newcommand{\dbl@adjust}[1]{%
  \fontdimen8
  \ifx#1\displaystyle\textfont\else
  \ifx#1\textstyle\textfont\else
  \ifx#1\scriptstyle\scriptfont\else
  \scriptscriptfont\fi\fi\fi 3
}
\makeatother

\title[On the better quasi order with three elements]{On the logical strength of\\ the better quasi order with three elements}
\author{Anton Freund}

\address{Anton Freund, Department of Mathematics, Technical University of Darmstadt, Schloss\-garten\-str.~7, 64289~Darmstadt, Germany}
\email{freund@mathematik.tu-darmstadt.de}

\thanks{Funded by the Deutsche Forschungsgemeinschaft (DFG, German Research Foundation) -- Project number 460597863.}

\begin{document}

\begin{abstract}
The notion of better quasi order ($\mathsf{BQO}$), due to Nash-Williams, is very fruitful mathematically and intriguing from the standpoint of logic, due to several long-standing open problems. In the present paper, we make a significant step towards one of these: Let $\mathbf 3$ be the discrete order with three~elements. We show that arithmetical recursion along the natural numbers ($\mathsf{ACA}_0^+$) follows from $\mathbf 3$ being $\mathsf{BQO}$, over the base theory $\mathsf{RCA_0}$ from reverse mathematics. Also over the latter, we deduce arithmetical transfinite recursion ($\mathsf{ATR}_0$) from the assumption that $\mathbf 3$ is $\Delta^0_2\text{-}\mathsf{BQO}$, which plays a role in work of Montalb\'an.
\end{abstract}

\keywords{Better quasi order, reverse mathematics, 3 is bqo}
\subjclass[2020]{06A07, 03B30, 03F15, 03F35}

\maketitle

\section{Introduction}\label{sect:intro}

A quasi order~$Q$ is called a well quasi order if any infinite sequence $f:\mathbb N\to Q$ admits $i<j$ with $f(i)\leq_Q f(j)$. Well quasi orders are central in combinatorics and have important applications in computer science, e.\,g., in the context of the graph minor theorem due to N.~Robertson and P.~Seymour~\cite{robertson-seymour-polytime,robertson-seymour-gm}. At the same time, the class of well quasi orders lacks closure properties under infinitary constructions. For example, when $Q$ is a well quasi order, the same need not hold for sequences in~$Q$ that are indexed by ordinals (with a natural order relation). To restore closure under this and further operations, C.~Nash-Williams~\cite{nash-williams-trees,nash-williams-bqo} has introduced the notion of better quasi order ($\bqo$). A famous application is R.~Laver's~\cite{laver71} proof of R.~Fra\"iss\'e's conjecture that the scattered linear orders are well quasi ordered by embeddability. For the definition of $\bqo$s and associated notation that is not explained in our paper, we refer to a survey by A.~Marcone~\cite{marcone-survey-new} (see, e.\,g., \cite{kruskal-rediscovered,pequignot-survey}~for further background on well and better quasi orders). To give an idea of the shape of the definition, we recall that a quasi order~$Q$ is $\bqo$ if any array $f:B\to Q$ for a barrier~$B\subseteq[\mathbb N]^{<\omega}$ admits $s\tl t$ with $f(s)\leq_Q f(t)$. The special case where $(B,\tl)$ coincides with $([\mathbb N]^1,\tl)\cong(\mathbb N,<)$ yields the weaker notion of well quasi order.

From the perspective of mathematical logic, many questions about better quasi orders remain wide open. In particular, we do not know which axioms are needed to prove that the discrete order $\three$ with underlying set~$\{0,1,2\}$ is~$\bqo$. Within the framework of reverse mathematics (see, e.\,g., the textbook by S.~Simpson~\cite{simpson09}), it is known that the rather strong axiom system $\mathsf{ATR_0}$ (arithmetical transfinite recursion) can prove that $\three$ is $\bqo$. Both A.~Marcone and A.~Montalb\'an have asked whether any weaker axiom system suffices (see Questions~3.3 and~25 of \cite{marcone-survey-old} and~\cite{montalban-open-problems}, respectively). The possibility that strong axioms may be needed to prove a property of a small finite object seems quite intriguing. It gains additional relevance from Montalb\'an's~\cite{montalban-fraisse} analysis of Fra\"iss\'e's conjecture, which shows that the latter reduces to $\three$ being $\Delta^0_2\text{-}\bqo$, over $\mathsf{ATR}_0$. In view of this fact, it is a major open question whether $\mathsf{ATR}_0$ proves that~$\three$ is $\Delta^0_2\text{-}\bqo$. We recall that the latter is provable in $\Pi^1_1\text{-}\mathsf{CA}_0$, since this stronger theory proves that the notions of $\bqo$ and of $\Delta^0_2\text{-}\bqo$ are equivalent (as shown by Montalb\'an~\cite{montalban-fraisse}).

In the present paper, we show that $\three$ being~$\bqo$ entails arithmetical recursion along~$\mathbb N$ (the principal axiom of~$\mathsf{ACA_0^+}$), over the usual base theory~$\rca_0$ from reverse mathematics. With the same base theory, we also prove that~$\three$ being $\Delta^0_2\text{-}\bqo$ implies the stronger principle of arithmetical transfinite recursion. As far as the author is aware, no lower bounds for the logical strength of these statements have been known so far. We do not know whether either bound is sharp. Indeed, the previous paragraph suggests that this may be hard to determine.

According to the standard view of reverse mathematics, our result shows that any proof that $\three$ is $\bqo$ must make use of complex infinite sets. It does also improve a result on Higman's lemma for $\bqo$s~\cite{freund-higman-bqo}. As a side product of our proof, we learn that any finite quasi order is $\bqo$ if the same holds~for~$\three$, still over~$\rca_0$. Here the quantification over finite quasi orders is internal. The corres\-ponding result with externally fixed orders is due to Marcone~\cite{marcone-survey-old}.

As indicated above, we work with `barrier-$\bqo$s' rather than `block-$\bqo$s', which appears to be the standard (cf.~Definition~2.3 of~\cite{montalban-fraisse} and the paragraph that follows it). In any case, it is immediate that any block-$\bqo$ is a barrier-$\bqo$. To account for the converse direction, we will first show that $\three$ being barrier-$\bqo$ implies arithmetical comprehension ($\aca_0$). The latter entails weak K\H{o}nig's lemma, which ensures that the notions of barrier-$\bqo$ and block-$\bqo$ are equivalent, as shown by P.~Cholak, A.~Marcone and R.~Solomon~\cite{cholak-RM-wpo}. This means that the choice of definition does not affect our main results after all.

To give an idea of our proofs, we indicate how a property of the finite object~$\three$ can lead to complex infinite sets. It is known that a quasi order~$Q$ is $\bqo$ precisely when the same holds for~$H_{\aleph_1}(Q)$, the collection of hereditarily countable sets with urelements from~$Q$, equipped with a suitable order. We refer to~\cite{forster-bqo,pequignot-survey} for proofs of this result, which goes back to Nash-Williams~\cite{nash-williams-trees}. In a somewhat similar vein, Montalb\'an~\cite{montalban-fraisse} has shown that a certain order~$(\tr(Q),\leq^w)$ of infinite $Q$-labelled trees is $\bqo$ when $Q$ is $\Delta^0_2\text{-}\bqo$, provably in~$\mathsf{ATR}_0$. The point for us is that $H_{\aleph_1}(\three)$ and $\tr(\three)$ are infinite even though $\three$ is finite.

We will show that the results on $H_{\aleph_1}(Q)$ and $\tr(Q)$ become available over~$\mathsf{RCA_0}$ when we restrict to the collection $H_f(Q)$ of hereditarily finite sets and to the set~$\tr_f(Q)$ of finite trees, respectively. This can be seen as a generalization of previous work of Marcone~\cite{marcone-survey-old} and the present author~\cite{freund-higman-bqo}, which is concerned with finitely iterated powersets. Together with known results, it already follows that $\three$ being $\Delta^0_2\text{-}\bqo$ is unprovable in~$\aca_0^+$. Indeed, the proof theoretic ordinal~$\varphi_2(0)$ of this theory admits an order repflecting map into~$(\tr(\mathbf 2),\leq^s)$ and then into $(\tr(\three),\leq^w)$, as shown by Marcone and Montalb\'an~\cite{marcone-montalban-hausdorff,montalban-fraisse} (where $\leq^s$ is a stricter order than~$\leq^w$).

The statement that some computable order is well founded has complexity~$\Pi^1_1$, so that it cannot imply the typical $\Pi^1_2$-principles of reverse mathematics. To overcome this obstacle, one can consider well ordering principles, which assert that some computable transformation~$F$ of countable linear orders preserves well foundedness. It is known that $F(\alpha)=\omega^\alpha$, $F(\alpha)=\varepsilon_\alpha$ and $F(\alpha)=\varphi_\alpha(0)$ correspond to arithmetical comprehension, arithmetical recursion along~$\mathbb N$ and arithmetical transfinite recursion, respectively~\cite{rathjen-afshari,friedman-montalban-weiermann,girard87,hirst94,marcone-montalban,rathjen-weiermann-atr}.

When we try to derive that some~$F$ is a well ordering principle, we face the challenge that $H_f(\three)$ and $\tr_f(\three)$ will not embed $F(\alpha)$ for every countable well order~$\alpha$. To resolve this, we take inspiration from the full structure~$H_{\aleph_1}(\three)$ after all. Consider the order $\Omega\oplus\Omega$ with two independent copies of a countable well order~$\Omega$. The idea is to model the elements of each copy by von Neumann ordinals over urelements~$\{0,1\}$ and~$\{1,2\}$, to show that $\Omega\oplus\Omega$ embeds into~$H_{\aleph_1}(\three)$. Officially, we will use elements of $\Omega\oplus\Omega$ as names for their images in~$H_{\aleph_1}(\three)$, so that the argument can be carried out in~$\rca_0$. Over the latter, we will learn that $\Omega\oplus\Omega$ is~$\bqo$ if the same holds for~$\three$. In order to show that~$F$ is a well ordering principle, it will then be enough to construct an order reflecting map from each value $F(\alpha)$ into $H_f(\Omega\oplus\Omega)$ or $T_f(\Omega\oplus\Omega)$, for a suitable~$\Omega$ that depends on~$\alpha$.

Concerning the previous paragraph, we note that two urelements are required for each copy of the von Neumann ordinals, as the order over a single urelement is trivial (see Remark~\ref{rmk:singleton-supports}). Indeed, it must be necessary to use three urelements in total, since $\mathbf 2$ being $\bqo$ can already be proved in~$\rca_0$, as shown by Marcone~\cite{marcone-survey-old}.

\section{From better quasi orders to arithmetical comprehension}\label{sect:ACA}

In the first part of this section, we consider the collection~$H_{\aleph_1}(Q)$ of hereditarily countable sets with urelements from a given quasi order~$Q$. The base theory is inessential, as our observations on~$H_{\aleph_1}(Q)$ play no official role in any of the subsequent arguments, even though they provide crucial motivation. For definiteness, we agree to work in Zermelo-Fraenkel set theory with choice ($\zfc$). In the second part of the section, we work in~$\rca_0$ and mimic an argument that involves~$H_{\aleph_1}(\three)$, to show that arithmetical comprehension follows from $\three$ being $\bqo$.

\begin{definition}[$\zfc$]\label{def:V(Q)}
Given a quasi order~$(Q,\leq_Q)$, the class $H_{\aleph_1}(Q)$ is generated by the following recursive clauses:
\begin{enumerate}[label=(\roman*)]
\item we have $\langle 0,q\rangle\in H_{\aleph_1}(Q)$ for each $q\in Q$,
\item each countable $a\subseteq H_{\aleph_1}(Q)$ yields another element $\langle 1,a\rangle\in H_{\aleph_1}(Q)$.
\end{enumerate}
The quasi order $\leq_{H(Q)}$ on $H_{\aleph_1}(Q)$ is recursively explained as follows:
\begin{align*}
\langle 0,p\rangle\leq_{H(Q)}\langle 0,q\rangle\quad&\Leftrightarrow\quad p\leq_Q q,\\
\langle 0,p\rangle\leq_{H(Q)}\langle 1,b\rangle\quad&\Leftrightarrow\quad \langle 0,p\rangle\leq_{H(Q)}y\text{ for some }y\in b,\\
\langle 1,a\rangle\leq_{H(Q)}\langle 0,q\rangle\quad&\Leftrightarrow\quad x\leq_{H(Q)}\langle 0,q\rangle\text{ for all }x\in a,\\
\langle 1,a\rangle\leq_{H(Q)}\langle 1,b\rangle\quad&\Leftrightarrow\quad \text{for each $x\in a$ there is $y\in b$ with $x\leq_{H(Q)}y$}.
\end{align*}
With each $a\in H_{\aleph_1}(Q)$ we associate a support set $\supp(a)\subseteq Q$ that is given by
\begin{equation*}
\supp(\langle 0,q\rangle)=\{q\}\quad\text{and}\quad \supp(\langle 1,a\rangle)=\bigcup\{\supp(x)\,|\,x\in a\}.
\end{equation*}
In order to improve readability, we will from now on write $q\in Q$ and $a\subseteq H_{\aleph_1}(Q)$ to refer to $\langle 0,q\rangle$ and $\langle 1,a\rangle$, respectively.
\end{definition}

One readily checks the following by induction on ranks.

\begin{lemma}[$\zfc$]\label{lem:V(Q)-basic}
For any quasi order~$Q$, we have the following:
\begin{enumerate}[label=(\alph*)]
\item Given $x\leq_{H(Q)}y$, we get $\supp(x)\leq_{H(Q)}\supp(y)$.
\item For $x\in a\subseteq H_{\aleph_1}(Q)$ we have $x\leq_{H(Q)}a$.
\end{enumerate}
\end{lemma}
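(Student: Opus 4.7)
Both parts are set up for a simultaneous induction on the set-theoretic rank of the elements involved, since every clause of Definition~\ref{def:V(Q)} compares an element only to elements of strictly smaller rank on both sides of~$\leq_{H(Q)}$. The first step of the plan is to record reflexivity of~$\leq_{H(Q)}$ as a one-line preliminary induction: $\langle 0,p\rangle\leq_{H(Q)}\langle 0,p\rangle$ reduces to $p\leq_Q p$, and $\langle 1,a\rangle\leq_{H(Q)}\langle 1,a\rangle$ is witnessed by choosing $y:=x$ for each $x\in a$ and invoking the inductive hypothesis.

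With reflexivity in hand, part~(b) becomes a direct reading of the definition: for $x\in a$, the element~$x$ itself is the required witness in the clause governing $\langle 0,p\rangle\leq_{H(Q)}\langle 1,a\rangle$ or $\langle 1,c\rangle\leq_{H(Q)}\langle 1,a\rangle$, according as $x$ has the form $\langle 0,p\rangle$ or $\langle 1,c\rangle$. For part~(a) I would induct again and split into the four cases of Definition~\ref{def:V(Q)}. Two elementary bookkeeping facts about~$\supp$ drive all four cases: if $y_0\in b$ then $\supp(y_0)\subseteq\supp(\langle 1,b\rangle)$, so witnesses obtained on the right-hand subterms are inherited by the whole; and dually $\supp(\langle 1,a\rangle)=\bigcup\{\supp(x_0):x_0\in a\}$, so pointwise witnesses across the individual $\supp(x_0)$ assemble into a witness for $\supp(\langle 1,a\rangle)$. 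In each of the four cases, the inductive hypothesis applied to the immediate subterms delivers exactly what these two facts require; for instance, in the clause $\langle 1,a\rangle\leq_{H(Q)}\langle 1,b\rangle$, each $x_0\in a$ comes with some $y_0\in b$ satisfying $x_0\leq_{H(Q)}y_0$, hence $\supp(x_0)\leq_{H(Q)}\supp(y_0)\subseteq\supp(\langle 1,b\rangle)$ by induction, and taking the union over $x_0$ closes the case.

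I do not expect any genuine obstacle here: the argument is purely combinatorial and the recursion terminates by well-foundedness of membership on~$H_{\aleph_1}(Q)$. The only mildly subtle point is that part~(b) tacitly uses reflexivity of~$\leq_{H(Q)}$, which however is itself a one-line induction and so should be extracted explicitly before attacking (b).
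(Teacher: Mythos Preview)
Your approach---induction on ranks with a case analysis---is exactly what the paper has in mind when it says ``one readily checks the following by induction on ranks,'' and your treatment of part~(a) is correct. One small correction on part~(b): in the case $x=\langle 1,c\rangle$, choosing $y:=x\in a$ as the witness requires $z\leq_{H(Q)}\langle 1,c\rangle$ for each $z\in c$, which is part~(b) itself at strictly smaller rank rather than reflexivity; so (b) is proved by the rank induction you announced in your first sentence, not merely as a consequence of reflexivity.
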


It is known that $Q$ is $\bqo$ precisely if $H_{\aleph_1}(Q)$ is $\bqo$ or, which is equivalent in this case, if $H_{\aleph_1}(Q)$ is well founded (see, e.\,g.,~\cite{pequignot-survey}). In Section~\ref{sect:ACA-plus}, we will prove the direction from left to right for the hereditarily finite part of~$H_{\aleph_1}(Q)$, with base theory~$\rca_0$. The following explains why we will mostly work with sets $a\subseteq H_{\aleph_1}(Q)$ for which $\supp(a)$ has at least two elements.

\begin{remark}\label{rmk:singleton-supports}
A straightforward induction shows that, for any $q\in Q$, we have
\begin{equation*}
\supp(x)\leq_{H(Q)} q\leq_{H(Q)}\supp(y)\quad\Rightarrow\quad x\leq_{H(Q)}y.
\end{equation*}
This means that any set $a\subseteq H_{\aleph_1}(Q)$ with $\supp(a)=\{q\}$ is equivalent to~$q$ in the order on~$H_{\aleph_1}(Q)$, and that the sets with empty support form an initial segment. One can show that this initial segment is linear, as the order between two sets with empty support coincides with the order between their ranks.
\end{remark}

Recall that $\three$ denotes the discrete order on~$\{0,1,2\}$. We now show that $H_{\aleph_1}(\three)$ embeds two incomparable copies of the von Neumann ordinals.

\begin{definition}[$\zfc$]
For all countable ordinals $\alpha$ we define $\dot\alpha,\ddot\alpha\in H_{\aleph_1}(\three)$ recursively by $\dot\alpha=\{0,1\}\cup\{\dot\gamma\,|\,\gamma<\alpha\}$ and $\ddot\alpha=\{1,2\}\cup\{\ddot\gamma\,|\,\gamma<\alpha\}$.
\end{definition}

As promised, we have the following.

\begin{proposition}[$\zfc$]\label{prop:embed-Omega-plus-Omega}
When $\alpha$ and $\beta$ are countable ordinals, we have
\begin{equation*}
\dot\alpha\leq_{H(\three)}\dot\beta\quad\Leftrightarrow\quad\alpha\leq\beta\quad\Leftrightarrow\quad\ddot\alpha\leq_{H(\three)}\ddot\beta.
\end{equation*}
Furthermore, $\dot\alpha$ and $\ddot\beta$ are incomparable in $H_{\aleph_1}(\three)$.
\end{proposition}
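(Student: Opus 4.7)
The plan is to split the statement into the two equivalences and the incomparability, and to tackle each by a short transfinite induction. First, a preliminary computation shows $\supp(\dot\alpha) = \{0, 1\}$ and $\supp(\ddot\alpha) = \{1, 2\}$ for every countable ordinal $\alpha$, directly from the recursive clauses of Definition~\ref{def:V(Q)}. I will also rely on reflexivity of $\leq_{H(\three)}$, which is a routine induction on rank.

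For $\dot\alpha \leq_{H(\three)} \dot\beta \Leftrightarrow \alpha \leq \beta$ the direction $\Leftarrow$ is immediate, since each element of $\dot\alpha$ is already an element of $\dot\beta$ and can be matched with itself via reflexivity. For $\Rightarrow$ I proceed by induction on $\alpha$: assuming $\dot\alpha \leq_{H(\three)} \dot\beta$, suppose towards a contradiction that $\beta < \alpha$, so that $\dot\beta \in \dot\alpha$. The set-domination clause then yields some $y \in \dot\beta$ with $\dot\beta \leq_{H(\three)} y$. If $y \in \{0, 1\}$, the mixed $\langle 1, \cdot\rangle \leq \langle 0, \cdot\rangle$ clause would force both urelements $0$ and $1$ of $\dot\beta$ to lie below a single element of the discrete order $\three$, which is impossible. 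Otherwise $y = \dot\gamma$ for some $\gamma < \beta$, and the inductive hypothesis applied at $\beta < \alpha$ delivers $\beta \leq \gamma$, contradicting $\gamma < \beta$. The analogous equivalence for $\ddot\alpha$ follows by the same argument with $\{0, 1\}$ replaced by $\{1, 2\}$.

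For the incomparability I will invoke Lemma~\ref{lem:V(Q)-basic}(a). If $\dot\alpha \leq_{H(\three)} \ddot\beta$ held, then $\{0, 1\} = \supp(\dot\alpha) \leq_{H(\three)} \supp(\ddot\beta) = \{1, 2\}$, which upon unfolding demands some $y \in \{1, 2\}$ with $0 \leq_{\three} y$, failing because $\three$ is discrete. The opposite comparison is symmetric, using $2 \in \{1, 2\}$ in place of $0 \in \{0, 1\}$. I expect the only step that requires any real thought to be the case analysis on $y$ in the main induction; but once the two-element support of $\dot\beta$ is exploited to kill the urelement sub-case, the remaining ordinal sub-case reduces cleanly to the inductive hypothesis, and the whole proof takes advantage only of the fact that $\three$ is discrete with at least two elements in each of the chosen urelement pairs.
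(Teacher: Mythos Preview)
Your proof is correct and matches the paper's argument in all essentials: the same support computation, the same appeal to Lemma~\ref{lem:V(Q)-basic}(a) for incomparability, and the same reduction of the non-trivial direction to the observation that $\dot\beta\leq_{H(\three)} y$ fails for $y\in\{0,1\}$ (two incomparable urelements in the support) and contradicts the induction hypothesis for $y=\dot\gamma$. The only cosmetic difference is that you induct on~$\alpha$ while the paper phrases the same descent as choosing a minimal counterexample~$\beta$.
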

\begin{proof}
We inductively get $\supp(\dot\alpha)=\{0,1\}$ and $\supp(\ddot\beta)=\{1,2\}$. Incomparability follows by Lemma~\ref{lem:V(Q)-basic}. The latter also ensures that $\dot\alpha\leq\dot\beta$ follows from $\dot\alpha\in\dot\beta$ and hence from $\alpha<\beta$. To show the converse by contradiction, we assume that $\beta$ is minimal such that $\dot\alpha\leq\dot\beta$ holds for some $\alpha>\beta$. In view of $\dot\beta\in\dot\alpha$ we must have $\dot\beta\leq y$ for some $y\in\dot\beta$. Note that $y\in\{0,1\}$ is impossible since the two elements of~$\supp(\dot\beta)=\{0,1\}$ are incomparable. We must thus have $y=\dot\gamma$ for some~$\gamma<\beta$, against minimality. The other equivalence holds for the same reasons.
\end{proof}

For quasi orders $Q_0$ and $Q_1$, we define $Q_0\oplus Q_1$ as the quasi order given by
\begin{gather*}
Q_0\oplus Q_1:=\{\langle i,q\rangle\,|\, i\in\{0,1\}\text{ and }q\in Q_i\},\\
\langle i,p\rangle\leq_{Q_0\oplus Q_1}\langle j,q\rangle\quad:\Leftrightarrow\quad i=j\text{ and }p\leq_{Q_i}q.
\end{gather*}
The following combines different results due to Marcone. We provide an integrated proof, amongst others because it will help to motivate subsequent generalizations. Note that $\omega$ denotes $\mathbb N$ with the usual order.

\begin{proposition}[$\rca_0$; \cite{marcone-survey-old}]\label{prop:aca-from-oplus}
Arithmetical comprehension follows from the statement that all arrays $[\mathbb N]^3\to\omega\oplus\alpha$ for any well order~$\alpha$ are good.
\end{proposition}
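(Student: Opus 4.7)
The plan is to deduce $\aca_0$ through Hirst's well-ordering principle: over $\rca_0$, $\aca_0$ is equivalent to the statement that $\omega^\alpha$ is a well order whenever $\alpha$ is. It therefore suffices to show, assuming the hypothesis of the proposition, that no well order $\alpha$ admits an infinite strictly descending sequence in $\omega^\alpha$.

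Fix such an $\alpha$ and, arguing by contradiction, let $(e_n)_{n\in\mathbb N}$ be strictly descending in $\omega^\alpha$. I represent each $e_n$ via its Cantor normal form as a finitely supported $g_n:\alpha\to\mathbb N$, so that the comparison $e_n>e_m$ is witnessed by the greatest position $a(n,m)\in\alpha$ at which $g_n$ and $g_m$ disagree, and there $g_n(a(n,m))>g_m(a(n,m))$. Two algebraic facts carry the whole combinatorial substance: first, $a(n,k)=\max(a(n,m),a(m,k))$ for $n<m<k$; second, in the ``equal'' case $a(n,m)=a(m,k)=a$ one obtains the chain $g_n(a)>g_m(a)>g_k(a)$ in $\mathbb N$.

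From this data I build a bad array $f:[\mathbb N]^3\to\omega\oplus\alpha'$ for a suitable well order $\alpha'$ constructed from $\alpha$ (so that $\alpha'$ is again a well order, and thus eligible in the hypothesis). The scheme is to let the $\omega$-component carry coefficient information from the $g_n$'s in those triples where $a(i,j)$ and $a(j,k)$ coincide, since the chain property then forces a strict drop in $\mathbb N$ along any $\tl$-extension; in the remaining triples the value is placed in the $\alpha'$-component so as to encode the position $a(i,k)=\max(a(i,j),a(j,k))$ together with a distinguishing marker. Values in different components of $\omega\oplus\alpha'$ are incomparable, which handles all mixed subcases, leaving a finite case check on the relative orderings of the positions $(a(i,j),a(j,k),a(k,l))$ in $\alpha$ arising from an edge $\{i,j,k\}\tl\{j,k,l\}$.

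The main obstacle will be arranging the $\alpha'$-encoding so that no two $\tl$-adjacent triples collide. Collisions arise precisely when the middle position $a(j,k)$ is an extremum of $(a(i,j),a(j,k),a(k,l))$: the naive code $a(i,k)$ then produces the same $\alpha$-value on both sides, since the maximum is attained at the common pair $(j,k)$. I expect the fix to use $\alpha'=\alpha\cdot k$ for a small $k$ (say $k=2$ or $3$), placing triples of ``left-dominated'' type ($a(i,j)>a(j,k)$) and ``right-dominated'' type ($a(i,j)<a(j,k)$) in separate copies of $\alpha$, so that the linear order on $\alpha'$ can distinguish them irrespective of the flanking positions; any residual collisions in the $\omega$-valued configurations can be broken by selecting the coefficient from a carefully chosen $g_n$, exploiting the chain property. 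Once $f$ is shown to be bad, the hypothesis is violated; hence $\omega^\alpha$ has no descending sequence, and $\aca_0$ follows by Hirst's principle.
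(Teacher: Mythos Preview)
Your overall strategy---reduce to Hirst's well-ordering principle and manufacture a bad $[\mathbb N]^3$-array from a descending sequence in $\omega^\alpha$---is exactly what the paper does. The paper, however, does not build the array by hand: it observes that a bad array $[\mathbb N]^n\to P_f(Q)$ yields a bad array $[\mathbb N]^{n+1}\to Q$, and then constructs a single order-reflecting map $\omega^\alpha\to P_f(P_f(\omega\oplus\alpha))$ by $\langle\alpha_0,\ldots,\alpha_{n-1}\rangle\mapsto\{\{(0,i),(1,\alpha_i)\}:i<n\}$. Two unwindings of the powerset reduction then give the bad $[\mathbb N]^3$-array into $\omega\oplus\alpha$ itself (no $\alpha'$ needed), with no case analysis at all.

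Your direct construction has a genuine gap in the case you call ``right-dominated'' on both sides: suppose $a(i,j)<a(j,k)<a(k,l)$. Both triples $\{i,j,k\}$ and $\{j,k,l\}$ are right-dominated, so your marker cannot separate them, and every $\alpha$-datum available to the first triple (namely $a(i,j)$ and $a(j,k)$) is strictly below every $\alpha$-datum available to the second (namely $a(j,k)$ and $a(k,l)$). No well order $\alpha'$ built monotonically from $\alpha$ can produce a strict descent here, so the R--R configuration is not a ``residual collision'' to be broken by coefficients---it is a strict ascent. The repair is to send right-dominated (and equal) triples into the $\omega$-component rather than into $\alpha'$: if you work with the \emph{uncompressed} Cantor normal form and let $I(n,m)$ be the first index at which the sequences for $e_n$ and $e_m$ differ, then one checks $I(n,m)=\sum_{b>a(n,m)}g_n(b)+g_m(a(n,m))$, and whenever $a(i,j)\leq a(j,k)$ one gets $I(i,j)>I(j,k)$. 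With the assignment $f(\{i,j,k\})=(1,a(i,j))$ if $a(i,j)>a(j,k)$ and $f(\{i,j,k\})=(0,I(i,j))$ otherwise, all nine configurations go through and $\alpha'=\alpha$ suffices. This is precisely what the paper's $P_f(P_f(\cdot))$ argument produces when unwound.
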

\begin{proof}
Given a quasi order~$Q$, we define a quasi order on the collection $P_f(Q)$ of finite subsets of~$Q$ by stipulating
\begin{equation*}
a\leq_{P(Q)}b\quad\Leftrightarrow\quad\text{for each $x\in a$ there is $y\in b$ with $x\leq_Q y$}.
\end{equation*}
Assume that $f:[\mathbb N]^n\to P_f(Q)$ is a bad array. For each $s=\langle s_0,\ldots,s_n\rangle\in[\mathbb N]^{n+1}$ we may then pick $g(s)\in f(\langle s_0,\ldots,s_{n-1}\rangle)$ with $g(s)\not\leq_Q q$ for all $q\in f(\langle s_1,\ldots,s_n\rangle)$. This yields a bad array $g:[\mathbb N]^{n+1}\to Q$. Given that all arrays $[\mathbb N]^3\to\omega\oplus\alpha$ are good, we thus learn that $P_f(P_f(\omega\oplus\alpha))$ is a well partial order. Now consider
\begin{equation*}
\omega^\alpha:=\{\langle\alpha_0,\ldots,\alpha_{n-1}\rangle\,|\,\alpha_i\in\alpha\text{ for all $i<n$ and }\alpha_{n-1}\leq\ldots\leq\alpha_0\}.
\end{equation*}
We get a linear order~$\preceq$ on $\omega^\alpha$ by stipulating
\begin{equation*}
\langle\alpha_0,\ldots,\alpha_{m-1}\rangle\preceq\langle\beta_0,\ldots,\beta_{n-1}\rangle\,\Leftrightarrow\,\begin{cases}
\text{either $\alpha_i=\beta_i$ for all $i<m\leq n$},\\[1ex]
\parbox{.47\textwidth}{or there is $j<\min(m,n)$ with $\alpha_j<\beta_j$\\ and $\alpha_i=\beta_i$ for all $i<j$.}
\end{cases}
\end{equation*}
Over $\rca_0$, arithmetical comprehension is equivalent to the statement that $\omega^\alpha$ is well founded for any well order~$\alpha$, as shown by J.-Y.~Girard~\cite{girard87} and J.~Hirst~\cite{hirst94}. To complete the proof, it is thus enough to construct an order reflecting map
\begin{equation*}
h:\omega^\alpha\to P_f(P_f(\omega\oplus\alpha)).
\end{equation*}
For $n\in\omega$ and $\gamma\in\alpha$, we abbreviate $\langle n,\gamma\rangle:=\{(0,n),(1,\gamma)\}\in P_f(\omega\oplus\alpha)$. Since the summands of $\omega\oplus\alpha$ are incomparable by definition, we see that $\langle m,\gamma\rangle\leq\langle n,\delta\rangle$ is equivalent to the conjunction of~$m\leq n$ and $\gamma\leq\delta$. We now put
\begin{equation*}
h(\langle\alpha_0,\ldots,\alpha_{n-1}\rangle)=\{\langle i,\alpha_i\rangle\,|\,i<n\}.
\end{equation*}
Let us consider $\alpha=\langle\alpha_0,\ldots,\alpha_{m-1}\rangle$ and $\beta=\langle\beta_0,\ldots,\beta_{n-1}\rangle$ with $h(\alpha)\leq h(\beta)$. For each $i<m$ we find a $j<n$ with $\langle i,\alpha_i\rangle\leq\langle j,\beta_j\rangle$. In view of $\beta_{n-1}\leq\ldots\leq\beta_0$ we can conclude $i\leq j$ and then $\alpha_i\leq\beta_j\leq\beta_i$. As $i<m\leq n$ was arbitrary, this yields the desired inequality $\alpha\preceq\beta$.
\end{proof}

Over $\rca_0$, we shall now show that $\Omega\oplus\Omega$ is $\bqo$ if the same holds for~$\three$, for an arbitrary well order~$\Omega$. If we choose the latter so that it embeds both $\omega$ and a given ordinal~$\alpha$, we can derive arithmetical comprehension by the previous proposition. The first step is to define a set~$\tc$ that represents the transitive closure of $\Omega\oplus\Omega\subseteq H_{\aleph_1}(\three)$ via the embedding from Proposition~\ref{prop:embed-Omega-plus-Omega}. From the viewpoint of the theory~$\rca_0$, the elements of $\tc$ are names rather than actual sets.

\begin{definition}[$\rca_0$]
Given a well order~$\Omega$, we consider the disjoint union
\begin{equation*}
\tc=\three\cup\{\overline\alpha\,|\,\alpha\in\Omega\}\cup\{\doverline\alpha\,|\,\alpha\in\Omega\}.
\end{equation*}
Let $\tcleq$ be the partial order on $\tc$ in which the only strict inequalities are
\begin{gather*}
i\tcle\overline\alpha\text{ for }i\in\{0,1\},\quad i\tcle\doverline\alpha\text{ for }i\in\{1,2\},\\
\overline\alpha\tcle\overline\beta\text{ and }\doverline\alpha\tcle\doverline\beta\text{ for }\alpha<\beta.
\end{gather*}
For $a\in\tc$ we define $E(a)\subseteq\tc$ by setting $E(i)=\{i\}$ for $i\in\three$ as well as $E(\overline\alpha)=\{0,1\}\cup\{\overline\gamma\,|\,\gamma<\alpha\}$ and $E(\doverline\alpha)=\{1,2\}\cup\{\doverline\gamma\,|\,\gamma<\alpha\}$.
\end{definition}

In view of the set theoretic considerations above, it might have been more natural to set $E(i)=\emptyset$ for $i\in\three$. However, the given definition allows for a particularly concise statement of the following result.

\begin{lemma}[$\rca_0$]\label{lem:order-Omega-plus-Omega}
Given $a,b\in\tc$, we have
\begin{equation*}
a\tcleq b\quad\Leftrightarrow\quad\text{for each $x\in E(a)$ there is $y\in E(b)$ with $x\tcleq y$}.
\end{equation*}
The direction from right to left is witnessed by a map $\frak x:\tc^2\to\tc$ such that $a\not\tcleq b$ entails both $\frak x(a,b)\in E(a)$ and $\frak x(a,b)\not\tcleq y$ for all $y\in E(b)$.
\end{lemma}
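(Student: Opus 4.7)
The plan is to proceed by case analysis on the shapes of $a$ and $b$, each being an element of $\three$, of the form $\overline\alpha$, or of the form $\doverline\alpha$. The obvious symmetry that swaps single and double overlines (and exchanges $0$ with $2$) roughly halves the work.

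For the forward direction, assume $a \tcleq b$. If $a = i \in \three$, then $E(a) = \{i\}$ and I need $y \in E(b)$ with $i \tcleq y$: when $b \in \three$, the inequality forces $b = i$, so $y := i$ works; when $b = \overline\beta$, the inequality forces $i \in \{0,1\}$, and $y := i \in E(b)$; the case $b = \doverline\beta$ is handled similarly. If $a = \overline\alpha$ and $b = \overline\beta$ with $\alpha \leq \beta$, then $E(a) \subseteq E(b)$ and $y := x$ works for every $x \in E(a)$; the case $a = \doverline\alpha, b = \doverline\beta$ is symmetric. No further shape combinations satisfy $a \tcleq b$.

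For the converse, I would construct $\frak x(a,b)$ by cases so that whenever $a \not\tcleq b$ we have $\frak x(a,b) \in E(a)$ and $\frak x(a,b) \not\tcleq y$ for every $y \in E(b)$. When $a = i \in \three$, set $\frak x(a,b) := i$; since every $y \in E(b)$ satisfies $y \tcleq b$ (verified separately on each shape of $b$), $i \tcleq y$ would yield $i \tcleq b$, contradicting $a \not\tcleq b$. When $a = \overline\alpha$ and $b = \overline\beta$ with $\beta < \alpha$, set $\frak x(a,b) := \overline\beta \in E(a)$; then no element of $E(b) = \{0,1\} \cup \{\overline\delta \mid \delta < \beta\}$ is $\tcleq$-above $\overline\beta$. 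In the remaining cases where $a = \overline\alpha$ but $b \in \three$ or $b = \doverline\beta$, I would pick an element of $\{0,1\} \subseteq E(\overline\alpha)$ incomparable with everything in $E(b)$: explicitly, $\frak x(\overline\alpha, 0) := 1$ and $\frak x(\overline\alpha, b) := 0$ in every remaining subcase. The cases with $a = \doverline\alpha$ are treated symmetrically.

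The main obstacle is essentially bookkeeping in the mixed cases where $a$ and $b$ lie on different branches of the order. The fact that each of $E(\overline\alpha)$ and $E(\doverline\alpha)$ contains two incomparable urelements of $\three$ is precisely what makes these cases go through, in line with the observation in Section~\ref{sect:intro} that two urelements per copy of the ordinals are indispensable.
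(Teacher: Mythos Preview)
Your proof is correct and follows essentially the same approach as the paper: a direct case analysis on the shapes of $a$ and $b$, observing for the forward direction that $a\tcleq b$ yields $E(a)\subseteq E(b)$, and for the converse giving an explicit definition of~$\frak x$ by cases. Your values for~$\frak x$ differ from the paper's in one harmless spot (you take $\frak x(\overline\alpha,2)=0$ where the paper takes~$1$), but both choices witness the required property.
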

\begin{proof}
For the direction from left to right, we observe that $a\tcleq b$ implies (and will thus be equivalent to) the apparently stronger statement $E(a)\subseteq E(b)$. To witness the converse direction, we set $\frak x(a,b)=a$ for $a\in\three$ as well as
\begin{gather*}
\frak x(\overline\alpha,1)=0,\quad \frak x(\overline\alpha,i)=1\text{ for }i\in\{0,2\},\quad\frak x(\overline\alpha,\overline\beta)=\overline\beta,\quad\frak x(\overline\alpha,\doverline\beta)=0,\\
\frak x(\doverline\alpha,1)=2,\quad \frak x(\overline\alpha,i)=1\text{ for }i\in\{0,2\},\quad\frak x(\doverline\alpha,\doverline\beta)=\doverline\beta,\quad\frak x(\doverline\alpha,\overline\beta)=2.
\end{gather*}
The desired property can be verified explicitly.
\end{proof}

Let us write $1+\Omega$ for the extension of~$\Omega$ by a new smallest element, which will be denoted by~$\bot$. In the set theoretic discussion above, the notion of rank has been fundamental. Here we have the following analogue.

\begin{lemma}[$\rca_0$]\label{lem:E-ordinal-descent}
There is a function $o:\tc\to1+\Omega$ such that $x\in E(a)$ with $a\notin\three$ implies $o(x)<o(a)$.
\end{lemma}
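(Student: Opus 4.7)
The plan is to exhibit $o$ explicitly rather than construct it recursively, so that existence in $\rca_0$ is trivial. I would define
\begin{equation*}
o(i)=\bot\text{ for }i\in\three,\qquad o(\overline\alpha)=\alpha,\qquad o(\doverline\alpha)=\alpha,
\end{equation*}
where on the right hand side of the last two equations, $\alpha\in\Omega$ is viewed as an element of $1+\Omega$ via the canonical order preserving embedding (which exists because $1+\Omega$ is obtained by adding a new bottom~$\bot$ below all of~$\Omega$). Since $\tc$ is the disjoint union of $\three$, $\{\overline\alpha\,|\,\alpha\in\Omega\}$ and $\{\doverline\alpha\,|\,\alpha\in\Omega\}$, the case distinction is well defined, and $o$ is then a $\Delta^0_1$-definable function whose existence is immediate in $\rca_0$.

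To verify the required property, I would argue by cases on $a\in\tc\setminus\three$. If $a=\overline\alpha$, then $E(a)=\{0,1\}\cup\{\overline\gamma\,|\,\gamma<\alpha\}$; for $x\in\{0,1\}$ we have $o(x)=\bot<\alpha=o(a)$, while for $x=\overline\gamma$ with $\gamma<\alpha$ we have $o(x)=\gamma<\alpha=o(a)$. The case $a=\doverline\alpha$ is symmetric, using $E(\doverline\alpha)=\{1,2\}\cup\{\doverline\gamma\,|\,\gamma<\alpha\}$.

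There is no real obstacle here: the lemma asks precisely for the quantity that is already visible in the indexing of the elements $\overline\alpha$ and $\doverline\alpha$ by ordinals, and the extra element~$\bot$ in $1+\Omega$ exists exactly to accommodate the three urelements, which otherwise have no natural target in~$\Omega$.
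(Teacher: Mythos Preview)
Your proof is correct and takes exactly the same approach as the paper: the paper's proof consists of the single sentence ``Clearly the claim holds when $o(i)=\bot$ for $i\in\three$ and $o(\overline\alpha)=o(\doverline\alpha)=\alpha$,'' which is precisely your definition. Your case analysis merely spells out what the paper leaves implicit.
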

\begin{proof}
Clearly the claim holds when $o(i)=\bot$ for $i\in\three$ and $o(\overline\alpha)=o(\doverline\alpha)=\alpha$.
\end{proof}

The following result will be superseded by Theorem~\ref{thm:Omega+Omega_bqo} below. It is convenient to prove the weaker result first, because this will secure arithmetical comprehension, which facilitates the rest of the argument. In particular, the upper index in $[\mathbb N]^3$ can be replaced by any $n\in\mathbb N$. We have chosen $n=3$ in view of Proposition~\ref{prop:aca-from-oplus}.

\begin{proposition}[$\rca_0$]\label{prop:Omega-plus-Omega-for-ACA}
If $\three$ is $\bqo$, then all arrays $[\mathbb N]^3\to\Omega\oplus\Omega$ for any well order~$\Omega$ are good.
\end{proposition}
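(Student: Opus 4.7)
The plan is to argue by contradiction. Suppose $f:[\mathbb N]^3\to\Omega\oplus\Omega$ is bad. The map $\langle 0,\alpha\rangle\mapsto\overline\alpha$ and $\langle 1,\alpha\rangle\mapsto\doverline\alpha$ is an order embedding of $\Omega\oplus\Omega$ into $\tc$ whose image avoids $\three$, so we may view $f$ as a bad array into $\tc\setminus\three$. The goal becomes to produce, from $f$, a bad array $g:B\to\three$ on some barrier $B$, contradicting the assumption that $\three$ is $\bqo$.

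The construction iterates the witness map $\frak x$ from Lemma~\ref{lem:order-Omega-plus-Omega}. Define $f_k:[\mathbb N]^{k+3}\to\tc$ recursively by $f_0=f$ and
\begin{equation*}
f_{k+1}(s_0,\ldots,s_{k+3})=\frak x\bigl(f_k(s_0,\ldots,s_{k+2}),\,f_k(s_1,\ldots,s_{k+3})\bigr).
\end{equation*}
A straightforward induction shows each $f_k$ is bad: if $s\tl t$ in $[\mathbb N]^{k+4}$ then $s\setminus\min s=t\setminus\max t$, so $f_{k+1}(t)\in E(f_k(t\setminus\max t))$, while the defining property of $\frak x$ applied to the bad pair $s\setminus\max s\tl s\setminus\min s$ yields $f_{k+1}(s)\not\tcleq y$ for every $y\in E(f_k(s\setminus\min s))=E(f_k(t\setminus\max t))$. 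The same observation gives $f_{k+1}(s)\in E(f_k(s_0,\ldots,s_{k+2}))$, so Lemma~\ref{lem:E-ordinal-descent} makes $o(f_{k+1}(s))$ strictly smaller than $o(f_k(s_0,\ldots,s_{k+2}))$ unless the latter already lies in $\three$.

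Next I would use the well-foundedness of $\Omega$ to extract a barrier. For each infinite $X=\{x_0<x_1<\ldots\}\subseteq\mathbb N$, the sequence $a_k:=f_k(x_0,\ldots,x_{k+2})$ is $o$-descending in $\Omega$ so long as $a_k\notin\three$, so a least $k$ with $a_k\in\three$ must exist. Setting
\begin{equation*}
B=\bigl\{(s_0,\ldots,s_{k+2})\,\big|\,f_k(s_0,\ldots,s_{k+2})\in\three\text{ and }f_j(s_0,\ldots,s_{j+2})\notin\three\text{ for all }j<k\bigr\}
\end{equation*}
and $g(s)=f_{|s|-3}(s)$ for $s\in B$, the minimality clause makes $B$ an antichain under initial segments, while the descent argument gives every infinite $X\subseteq\mathbb N$ an initial segment in $B$. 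Hence $B$ is a barrier and $g:B\to\three$ is well defined by $\Delta^0_1$-comprehension.

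The main step, and the principal obstacle, is to verify that $g$ is bad. Given $s\tl t$ in $B$ with $|s|=m+3$ and $|t|=n+3$, the fact that $s\setminus\min s$ is an initial segment of $t$ forces $m\leq n+1$. If $m\leq n$, let $u=(t_0,\ldots,t_{m+2})$; then $s\tl u$ in $[\mathbb N]^{m+3}$, so badness of $f_m$ gives $f_m(s)\not\tcleq f_m(u)$, while iterating $f_{k+1}(v)\tcleq f_k(v\setminus\max v)$ (immediate from $f_{k+1}(v)\in E(f_k(v\setminus\max v))$) from $k=n-1$ down to $k=m$ yields $g(t)=f_n(t)\tcleq f_m(u)$; thus $g(s)\tcleq g(t)$ would violate badness of $f_m$. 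If $m=n+1$, then $s\setminus\min s=t$, so $g(s)=\frak x(f_n(s\setminus\max s),f_n(t))$; since $g(t)=f_n(t)\in\three$ has $E(f_n(t))=\{g(t)\}$, the defining property of $\frak x$ directly yields $g(s)\not\tcleq g(t)$. Because $\three$ is discretely ordered, $g(s)\not\tcleq g(t)$ is equivalent to $g(s)\not\leq_{\three}g(t)$, so $g:B\to\three$ is bad and we obtain the desired contradiction.
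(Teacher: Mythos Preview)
Your construction of the iterates $f_k$ and your verification that each $f_k$ is bad on $[\mathbb N]^{k+3}$ are essentially the same as the paper's definition of~$g$ on $\mathcal N=\bigcup_{n\geq 3}[\mathbb N]^n$. The descent via Lemma~\ref{lem:E-ordinal-descent} and the definition of~$B$ also match the paper. The badness argument for~$g$ on~$B$ is fine.

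The genuine gap is your claim that ``hence $B$ is a barrier''. The two properties you check---that $B$ is a $\sqsubset$-antichain and that every infinite $X\subseteq\mathbb N$ has an initial segment in~$B$---establish only that $B$ is a \emph{block}. A barrier must additionally be an antichain under~$\subseteq$ (set inclusion, not initial segment), and your~$B$ need not satisfy this. For a concrete illustration: with suitable values of~$f$ one can arrange $f_1(0,2,4,5)\in\three$ (e.g.\ $f(0,2,4)=\overline\alpha$, $f(2,4,5)=\doverline\beta$) while simultaneously $f_1(0,1,2,3),f_2(0,1,2,3,4)\notin\three$ and $f_3(0,1,2,3,4,5)\in\three$ (keep $f(0,1,2),f(1,2,3),f(2,3,4)$ of the form~$\overline\gamma$ and switch type at $f(3,4,5)$). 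Then $(0,2,4,5)$ and $(0,1,2,3,4,5)$ both lie in~$B$, yet the first is a subset of the second.

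This is not a technicality: since we are working in~$\rca_0$ and the notion of $\bqo$ is formulated via barriers, a bad array on a block does not directly contradict $\three$ being $\bqo$. Converting a block into a barrier uses weak K\H{o}nig's lemma in general (Cholak--Marcone--Solomon), which is not available here. The paper's proof is largely devoted to this point: it builds an auxiliary barrier~$B^*$ from~$B$ via the $\ll$-closure of Cholak--Marcone--Solomon, and the crucial observation is that in the present situation the tree~$T^*$ has no infinite branch because such a branch would yield, via $\overline o(n)=\max\{o(g(s))\mid s\in T,\,s\ll h[n]\}$, an infinite descending sequence in~$\Omega$. This replacement of weak K\H{o}nig's lemma by the well-foundedness of~$\Omega$ is the main technical content of the proof, and it is missing from your argument.
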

\begin{proof}
Clearly $\Omega\oplus\Omega$ embeds into~$\tc$. Aiming at the contrapositive, we assume that $f:[\mathbb N]^3\to\tc$ is bad. For $s=\langle s_0,\ldots,s_n\rangle\in[\mathbb N]^{n+1}$~with~$n>0$, we set $s^0:=\langle s_0,\ldots,s_{n-1}\rangle$ and $s^1:=\langle s_1,\ldots,s_n\rangle$. Note that we have $s^0\tl s^1$ and that $s\tl t$ entails $s^1=t^0$ when $s,t\in[\mathbb N]^{n+1}$ have the same length. With~$\frak x$ as given by Lemma~\ref{lem:order-Omega-plus-Omega}, we define $g:\mathcal N=\bigcup_{n\geq 3}[\mathbb N]^n\to\tc$ recursively by
\begin{equation*}
g(s)=\begin{cases}
f(s) & \text{when $s\in[\mathbb N]^3$},\\
\mathfrak x(g(s^0),g(s^1)) & \text{otherwise}.
\end{cases}
\end{equation*}
By induction on~$n\geq 3$, we show that $s\tl t$ entails $g(s)\not\tcleq g(t)$ when $s,t\in[\mathbb N]^n$ have the same length. For $n=3$ this holds by the assumption on~$f$. Now consider the induction step for $s\tl t$ in~$[\mathbb N]^{n+1}$. Since we have $s^0\tl s^1=t^0$, the induction hypothesis and Lemma~\ref{lem:order-Omega-plus-Omega} yield $g(s)\not\tcleq y$ for all $y\in E(g(t^0))$. Due to the same lemma we obtain $g(t)\in E(g(t^0))$, so that we indeed have $g(s)\not\tcleq g(t)$. Let us also observe that we get $o(g(t))<o(g(t^0))$ in case we have $g(t^0)\notin\three$, due to Lemma~\ref{lem:E-ordinal-descent}. For $g(t^0)\in\three$ we get $g(t)=g(t^0)$ because of $E(g(t^0))=\{g(t^0)\}$. Let us now set
\begin{equation*}
B=\left\{\left.t\in\mathcal N\,\right|\,g(t)\in\three\text{ but }g(s)\notin\three\text{ for all }s\sqsubset t\text{ in }\mathcal N\right\},
\end{equation*}
where $s\sqsubset t$ denotes that $s$ is a proper initial segment of~$t$. To establish that $B$ is a block with base $\mathbb N$, we must show that each infinite set $X\subseteq\mathbb N$ admits an $n\geq 3$ with~$X[n]\in B$. Here $X[n]$ is the set of the $n$ smallest elements of~$X$, identified with its increasing enumeration. We will similarly write $t[n]$ when $t$ is a finite sequence of length at least~$n$. Let us note that we have $X[n+1]^0=X[n]$.  If there was no $n\geq 3$ with $X[n]\in B$, the above would thus yield $o(X[3])>o(X[4])>\ldots$, against the well foundedness of~$\Omega$. We~will construct a barrier~$B^*$ such that each $t\in B^*$ has an initial segment~$t[n]\in B$. In view of $g(t[n])\in\three$ and $t[n+1]^0=t[n]$, the above will ensure $g(t[n])=g(t[n+1])=\ldots=g(t)$, which means that $g$ restricts to an array~$B^*\to\three$. To see that this array must be bad, we consider $s,t\in B^*$ with~$s\tl t$. For sufficiently large $n\geq 3$, we find $s',t'\in[\mathbb N]^n$ with $s\sqsubset s'$ and $t\sqsubset t'$ as well as~$s'\tl t'$. The point is that $s'$ and $t'$ have the same length, so that the above yields~$g(s')\not\tcleq g(t')$. Due to $g(s)\in\three$ and $s\sqsubset s'$ we have $g(s)=g(s')$ and similarly $g(t)=g(t')$, as we have just seen. Hence we indeed get $g(s)\not\tcleq g(t)$. It remains to construct the barrier $B^*$. The following is due to Cholak, Marcone and Solomon~\cite{cholak-RM-wpo}, who use weak K\H{o}nig's lemma to cover a somewhat more general situation. We reproduce their argument in order to show that $\rca_0$ suffices in the present case. For $s,t\in[\mathbb N]^{<\omega}$ we write $s\ll t$ to denote that $s=\langle s_0,\ldots,s_{n-1}\rangle$ and $t=\langle t_0,\ldots,t_{n-1}\rangle$ have the same length and validate $s_i\leq t_i$ for all $i<n$. Let us observe that each $t$ will only admit finitely many~$s\ll t$. Following~\cite{cholak-RM-wpo}, we consider
\begingroup
\allowdisplaybreaks
\begin{align*}
T={}&\{s\in[\mathbb N]^{<\omega}\,|\,r\notin B\text{ for all }r\sqsubset s\},\\
T^*={}&\{s\in[\mathbb N]^{<\omega}\,|\,r\in T\text{ for some }r\ll s\},\\
B^*={}&\{s\in T^*\,|\,t\notin T^*\text{ for all }t\in[\mathbb N]^{<\omega}\text{ with }s\sqsubset t\}={}\\*
{}&\{s\in T^*\,|\,r\in B\text{ for all }r\ll s\text{ with }r\in T\}.
\end{align*}%
\endgroup
Note that $T$ and $T^*$ are trees with leaves~$B$ and $B^*$. Reflexivity of~$\ll$ yields $T\subseteq T^*$, which entails that elements of~$B^*$ have initial segments in~$B$, as promised above. The equality between the two characterizations of~$B^*$ is verified in~\cite[Section~5]{cholak-RM-wpo}. It ensures that $B^*$ can be formed in~$\rca_0$. Given that $B$ is a block with base~$\mathbb N$, the tree $T$ has no infinite path. The crucial task, for which \cite{cholak-RM-wpo} uses weak K\H{o}nig's lemma, is to show that the same holds for~$T^*$. Towards a contradiction, we assume that $h:\mathbb N\to\mathbb N$ validates $h[n]=\langle h(0),\ldots,h(n-1)\rangle\in T^*$ for all~$n\in\mathbb N$. Let us put
\begin{equation*}
\overline o(n):=\max\{o(g(s))\,|\,s\in T\text{ and }s\ll h[n]\}
\end{equation*}
for $n\geq 3$. We note that the maximum is taken over a set that is finite but nonempty, due to the definition of~$T^*$. To get a contradiction with the well foundedness of~$\Omega$, we now show $\overline o(n+1)<\overline o(n)$. Pick $s\in T$ with $s\ll h[n+1]$ and $\overline o(n+1)=o(g(s))$. For $r=s^0$ we have $r\ll h[n]$ as well as $r\sqsubset s$ and hence~$r\in T$. We get $g(s)\in E(g(r))$ as above. In view of $r\sqsubset s\in T$ we must have $g(r)\notin\three$. By invoking Lemma~\ref{lem:E-ordinal-descent} once again, we can conclude $\overline o(n+1)=o(g(s))<o(g(r))\leq\overline o(n)$. As in the proof of Lemma~5.11 from~\cite{cholak-RM-wpo}, we now learn that $B^*$ is a barrier: Given an infinite $X\subseteq\mathbb N$, pick $n\in\mathbb N$ with $X[n]\in T^*$ but $X[n+1]\notin T^*$. Aiming at a contradiction, we assume $X[n]\notin B^*$. By the second characterization of $B^*$ above, we find $r\ll X[n]$ with $r\in T$ and $r\notin B$. Pick $r'\ll X[n+1]$ with $r\sqsubset r'$. We then have $r'\in T$ and hence $X[n+1]\in T^*$, against our assumption. This shows that $B^*$ is a block with base~$\mathbb N$. To see that it is a barrier, we consider $s,t\in B^*$. Towards a contradiction, we assume $s\subset t$. Then $t$ is strictly longer than $s$. For~$t'\sqsubset t$ with the same length as~$s$, we get $t'\ll s$. Due to $t\in T^*$ we find $r\ll t$ with~$r\in T$. Let $r'\sqsubset r$ have the same length as $s$. We obtain $r'\in T$ and $r'\ll t'\ll s$. By the second characterization of~$B^*$ we get $r'\in B$, which contradicts $r\in T$. To summarize, we have shown that a bad array $[\mathbb N]^3\to\Omega\oplus\Omega$ yields a bad array $B^*\to\three$ that is defined on a barrier, as needed to establish the contrapositive.
\end{proof}

The following result will be superseded by Corollary~\ref{cor:three-to-acaplus}.

\begin{corollary}[$\rca_0$]\label{cor:three-aca}
Arithmetical comprehension follows from $\three$ being $\bqo$.
\end{corollary}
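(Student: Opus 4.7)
The claim is an immediate corollary of the preceding two propositions. Given the hypothesis that $\three$ is $\bqo$ and a well order~$\alpha$, the plan is to choose a well order~$\Omega$ that embeds both $\omega$ and~$\alpha$; the ordinal sum $\omega+\alpha$ is a natural choice, since $\rca_0$ readily verifies that it is a well order and that it contains $\omega$ as an initial segment and~$\alpha$ via the shift $\gamma\mapsto\omega+\gamma$. From these two embeddings one obtains an order reflecting map $\omega\oplus\alpha\to\Omega\oplus\Omega$ by sending each summand into the corresponding copy of~$\Omega$. This is compatible with the disjointness of the two summands, since both $\omega\oplus\alpha$ and $\Omega\oplus\Omega$ make their respective components incomparable by definition.

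By Proposition~\ref{prop:Omega-plus-Omega-for-ACA}, the fact that $\three$ is $\bqo$ guarantees that every array $[\mathbb N]^3\to\Omega\oplus\Omega$ is good. Composing with the order reflecting map $\omega\oplus\alpha\to\Omega\oplus\Omega$ shows that every array $[\mathbb N]^3\to\omega\oplus\alpha$ is good as well, because post-composition with an order reflecting map turns bad arrays into bad arrays. Since $\alpha$ was arbitrary, Proposition~\ref{prop:aca-from-oplus} yields arithmetical comprehension.

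There is no real obstacle at this stage: the genuine work has been carried out in Propositions~\ref{prop:aca-from-oplus} and~\ref{prop:Omega-plus-Omega-for-ACA}, and what remains is the bookkeeping step of choosing~$\Omega$ so that $\omega\oplus\alpha$ embeds into $\Omega\oplus\Omega$, together with the trivial observation that goodness of arrays transfers along order reflecting maps in the codomain.
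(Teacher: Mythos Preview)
Your proof is correct and follows essentially the same route as the paper: choose $\Omega=\omega+\alpha$, embed $\omega\oplus\alpha$ into $\Omega\oplus\Omega$, and combine Propositions~\ref{prop:aca-from-oplus} and~\ref{prop:Omega-plus-Omega-for-ACA}. You spell out a bit more explicitly why the embedding is order reflecting and why goodness of arrays transfers along it, but the argument is the same.
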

\begin{proof}
For a given well order $\alpha$, we consider the well order
\begin{equation*}
\Omega:=\omega+\alpha=\{\langle 0,n\rangle\,|\,n\in\omega\}\cup\{\langle 1,\gamma\rangle\,|\,\gamma\in\alpha\}
\end{equation*}
in which the first summand is placed below the second. More explicitly, this means that we have $\langle 0,n\rangle<\langle 1,\gamma\rangle$ for any $n\in\omega$ and $\gamma\in\alpha$, as well as $\langle 0,m\rangle<\langle 0,n\rangle$ for~$m<n$ and $\langle 1,\beta\rangle<\langle 1,\gamma\rangle$ for $\beta<\gamma$. The point is that $\omega\oplus\alpha$ can be embedded into $\Omega\oplus\Omega$. Given that $\three$ is $\bqo$, the previous proposition ensures that all arrays $[\mathbb N]^3\to\omega\oplus\alpha$ are good. Proposition~\ref{prop:aca-from-oplus} yields arithmetical comprehension.
\end{proof}

In the presence of weak K\H{o}nig's lemma and in particular of arithmetical comprehension, the definitions of $\bqo$s in terms of barriers and blocks are equivalent (see~\cite{cholak-RM-wpo}). This helps to prove the following strengthening of Proposition~\ref{prop:Omega-plus-Omega-for-ACA}.

\begin{theorem}[$\rca_0$]\label{thm:Omega+Omega_bqo}
If $\three$ is $\bqo$, then so is $\Omega\oplus\Omega$ for every well order~$\Omega$.
\end{theorem}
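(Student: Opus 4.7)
Since Corollary \ref{cor:three-aca} already extracts $\aca_0$ (hence weak K\"onig's lemma) from the hypothesis, the Cholak--Marcone--Solomon equivalence of barrier-$\bqo$ and block-$\bqo$ is available. I would show the contrapositive: from a bad array $f \colon B_0 \to \tc$ on an arbitrary barrier $B_0$ (using the embedding $\Omega \oplus \Omega \hookrightarrow \tc$), construct a bad array into $\three$ on a barrier, contradicting the hypothesis that $\three$ is $\bqo$.

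The construction generalises Proposition \ref{prop:Omega-plus-Omega-for-ACA}. The plan is to define $g \colon \mathcal N_{B_0} \to \tc$ on a suitable domain $\mathcal N_{B_0}$ containing $B_0$, with $g(s) = f(s)$ for $s \in B_0$ and $g(s) = \mathfrak x(g(s^0), g(s^1))$ for longer $s$, where $\mathfrak x$ comes from Lemma \ref{lem:order-Omega-plus-Omega}. For the uniform case $B_0 = [\mathbb N]^n$ the natural domain $\bigcup_{k \geq n} [\mathbb N]^k$ works, because the shifts $s^0, s^1$ automatically stay inside it; for general $B_0$ one needs to close $\mathcal N_{B_0}$ upward under the pair of shifts and to handle carefully the case where $s \mapsto s^1$ falls outside the cone of extensions of $B_0$. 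Once $g$ is defined, the induction of Proposition \ref{prop:Omega-plus-Omega-for-ACA} shows $g(s) \not\tcleq g(t)$ for $s \tl t$ of equal length, and Lemma \ref{lem:E-ordinal-descent} makes $o(g(s))$ strictly descend along shifts as long as the value has not yet entered $\three$.

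Next, I would set
\[B^* = \{t \in \mathcal N_{B_0} : g(t) \in \three \text{ and } g(s) \notin \three \text{ for all } s \sqsubsetneq t \text{ in } \mathcal N_{B_0}\}\]
and verify that $B^*$ is a block with base $\mathbb N$: every infinite $X \subseteq \mathbb N$ must have some prefix with $g$-value in $\three$, for otherwise the $o$-values along the prefixes of $X$ would form an infinite descent in $\Omega$. The restriction $g\!\restriction\! B^*$ is a bad array into $\three$, and since we have weak K\"onig's lemma the Cholak--Marcone--Solomon construction (reproduced inside the proof of Proposition \ref{prop:Omega-plus-Omega-for-ACA}) lifts $B^*$ to a barrier $B^{**}$ carrying a bad array into $\three$, contradicting the hypothesis.

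The hard part will be defining $\mathcal N_{B_0}$ so that all three requirements are simultaneously met: closure under $s \mapsto s^0, s^1$, the possibility to reach $\three$ along every infinite branch, and compatibility with the ordinal-descent argument. For uniform barriers this is transparent, as both shifts preserve the length profile; for barriers with elements of varying length (such as Schreier-like barriers) the shift $s \mapsto s^1$ can leave $B_0$'s cone of extensions, so $\mathcal N_{B_0}$ must be enlarged with care. This is precisely the step where the availability of weak K\"onig's lemma (through $\aca_0$) will be used substantively, beyond its role in the final CMS lifting.
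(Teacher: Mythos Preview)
Your outline matches the paper's strategy almost exactly: contrapositive, recursive definition $g(s)=\mathfrak x(g(s^0),g(s^1))$, badness by induction on a length parameter, ordinal descent via Lemma~\ref{lem:E-ordinal-descent} to get a block~$B'$, and then the Cholak--Marcone--Solomon lifting (enabled by Corollary~\ref{cor:three-aca}) at the very end. The one place you leave open---the definition of the domain $\mathcal N_{B_0}$ for a non-uniform barrier---is also the one place where the paper supplies a concrete idea you are missing.

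The paper's solution is to set $\mathcal B=\bigcup_{n\ge 1}B^n$ with
\[
B^n=\{\,s(0)\cup\ldots\cup s(n-1)\ :\ s(0)\tl\ldots\tl s(n-1)\text{ and }s(i)\in B\,\}.
\]
One checks (using that $B$ is a barrier) that every $s\in\mathcal B$ has a \emph{unique} such decomposition, so $s^0:=s(0)\cup\ldots\cup s(n-2)$ and $s^1:=s(1)\cup\ldots\cup s(n-1)$ are well defined, lie in $B^{n-1}\subseteq\mathcal B$, and satisfy $s^0\tl s^1$ and $s\tl t\Rightarrow s^1=t^0$ for $s,t\in B^n$. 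This gives exactly the closure and shift structure you were looking for, and the induction from Proposition~\ref{prop:Omega-plus-Omega-for-ACA} then goes through with $n$ as the level in~$\mathcal B$. For an infinite $X$, the relevant initial segments are $X\{n\}:=t(0)\cup\ldots\cup t(n-1)$, where $t(i)\in B$ is the initial segment of $X$ minus its $i$ smallest elements; these satisfy $X\{n+1\}^0=X\{n\}$, so the $o$-descent argument is unchanged.

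One correction: you speculate that weak K\H{o}nig's lemma will be needed ``substantively'' to build $\mathcal N_{B_0}$. In the paper's argument it is not---the construction of $\mathcal B$ and the verification that $B'$ is a block are carried out directly in $\rca_0$. Weak K\H{o}nig's lemma enters only at the last step, to convert the block $B'$ into a barrier via~\cite[Theorem~5.12]{cholak-RM-wpo}.
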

\begin{proof}
Aiming at the contrapositive, we assume that $f:B\to\tc$ is a bad array for some barrier~$B$. Let us consider $\mathcal B=\bigcup_{n\geq 1}B^n$ with
\begin{equation*}
B^n=\{s(0)\cup\ldots\cup s(n-1)\,|\,s(0)\tl\ldots\tl s(n-1)\text{ and }s(i)\in B\text{ for }i<n\}.
\end{equation*}
The idea is that $\mathcal B$ replaces $\mathcal N$ from the proof of Proposition~\ref{prop:Omega-plus-Omega-for-ACA}. Each $s\in\mathcal B$ can be uniquely written as $s=s(0)\cup\ldots\cup s(n-1)$ with $s(i)\in B$ and $s(0)\tl\ldots\tl s(n-1)$. To see this, use induction on~$i$ to verify that $s(i)\cup\ldots\cup s(n-1)$ is $s$ without its smallest $i$ elements. Next, note that $s(i)$ is an initial segment of $s(i)\cup\ldots\cup s(n-1)$. There is only one such $s(i)$ in the block~$B$. Since $B$ is a barrier, the maximal element of $s(i+1)$ will always exceed the one of~$s(i)$. This shows that $n$ is unique and bounded by the length of~$s$. The latter entails that $s\in\mathcal B$ is decidable. For~$s$ as above, we now set $s^0:=s(0)\cup\ldots\cup s(n-2)$ and $s^1:=s(1)\cup\ldots\cup s(n-1)$, provided that we have $n>1$. The above entails that $s^0$ is an initial segment of~$s$, which yields $s^0\tl s^1$. Given $s\tl t$ with $s,t\in B^n$ for a single~$n$, we get $s^1=t^0$, also by the above. We now define $g:\mathcal B\to\tc$ by
\begin{equation*}
g(s)=\begin{cases}
f(s) & \text{if }s\in B=B^1,\\
\mathfrak x(g(s^0),g(s^1)) & \text{otherwise}.
\end{cases}
\end{equation*}
As in the proof of Proposition~\ref{prop:Omega-plus-Omega-for-ACA}, one can employ induction on $n$ to show that $s\tl t$ entails $g(s)\not\tcleq g(t)$ when we have $s,t\in B^n$ for a single~$n$. Once again we get $g(t)\in E(g(t^0))$, which yields $o(g(t))<o(g(t^0))$ when $g(t^0)\notin\three\subseteq\tc$ and $g(t)=g(t^0)$ when $g(t^0)\in\three$. Let us now consider
\begin{equation*}
B'=\{t\in\mathcal B\,|\,g(t)\in\three\text{ but }g(s)\notin\three\text{ for all }s\sqsubset t\text{ in }\mathcal B\}.
\end{equation*}
To show that $B'$ is a block, we consider an infinite $X\subseteq\mathbb N$. Given that $B$ is a block, we find~$t(i)\in B$ that are initial segments of $X$ without its smallest $i$ elements. Note that we get~$t(i)\tl t(i+1)$. As above, we see that $X\{n\}:=t(0)\cup\ldots\cup t(n-1)$ is an initial segment of~$X$. Towards a contradiction, we assume that $g(X\{n\})\notin\three$ holds for all~$n\geq 1$. In view of $X\{n+1\}^0=X\{n\}$ we get $o(g(X\{1\}))>o(g(X\{2\}))>\ldots$, against the well foundedness of~$\Omega$. Now let $n\geq 1$ be minimal with $g(X\{n\})\in\three$. Due to the observations from the beginning of this proof, we see that $\mathcal B\ni s\sqsubset X\{n\}$ entails $s=X\{m\}$ for some~$m<n$. We thus obtain $X\{n\}\in B'$, as needed to ensure that $B'$ is a block with base~$\mathbb N$. The function $g$ restricts to a bad array $B'\to\three$, essentially as in the proof of Proposition~\ref{prop:Omega-plus-Omega-for-ACA}. To justify this claim in detail, we consider $s,t\in B'$ with~$s\tl t$. Pick an infinite $X\subseteq\mathbb N$ with $s\cup t\sqsubset X$ and write $Y$ for $X$ without its least element. We then have $s\sqsubset X$ and $t\sqsubset Y$. Define $X\{n\}$ and $Y\{n\}$ as in the proof that $B'$ is a~block. The previous considerations show that we have $s=X\{k\}$ and $t=Y\{m\}$ for some~$k,m\geq 1$. We put $n:=\max\{k,m\}$ and note $X\{n\}\tl Y\{n\}$. The point is that we have a single~$n$ with $X\{n\},Y\{n\}\in B^n$. We thus get $g(X\{n\})\not\tcleq g(Y\{n\})$ by the above. In view of $X\{k+1\}^0=X\{k\}$ and $g(X\{k\})\in\three$, we also have $g(X\{k\})=g(X\{k+1\})$. By induction this yields $g(s)=g(X\{n\})$ as well as $g(t)=g(Y\{n\})$. It follows that we have $g(s)\not\tcleq g(t)$, as required. Now if $\three$ was $\bqo$, the previous corollary would secure arithmetical comprehension and in particular weak K\H{o}nig's lemma. In the presence of the latter, the bad array $B'\to\three$ on our block~$B'$ could be transformed into a bad array $B''\to\three$ on a barrier~$B''$, by Theorem~5.12 of~\cite{cholak-RM-wpo}. But then $\three$ would not be $\bqo$ after all.
\end{proof}

As mentioned in the introduction, the following corollary strengthens a result of Marcone~\cite{marcone-survey-old} by internalizing the quantification over finite quasi orders.

\begin{corollary}[$\rca_0$]\label{cor:three-to-finite}
If $\three$ is $\bqo$, then so is any finite quasi order.
\end{corollary}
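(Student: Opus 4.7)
The argument reduces the claim to showing that the discrete quasi order $\mathbf n$ on $n$ elements is $\bqo$ for every $n\in\mathbb N$, and then derives this uniformly using the arithmetical comprehension supplied by Corollary~\ref{cor:three-aca}.

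For the reduction: given any finite quasi order $Q=\{q_0,\ldots,q_{n-1}\}$, the bijection $\phi\colon Q\to\mathbf n$ sending $q_i$ to $i$ is order-reflecting, since any comparability $\phi(q_i)\leq_{\mathbf n}\phi(q_j)$ in the discrete target forces $i=j$ and hence $q_i\leq_Q q_j$ by reflexivity. Composing any bad array $f\colon B\to Q$ with $\phi$ therefore yields a bad array $\phi\circ f\colon B\to\mathbf n$, so it is enough to show that every $\mathbf n$ is $\bqo$.

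To establish this uniformly, I would view $\mathbf n$ as the iterated disjoint sum $\mathbf 1\oplus\mathbf 1\oplus\cdots\oplus\mathbf 1$ of $n$ copies of $\mathbf 1$, and argue by induction on $n$ that finite sums of $\bqo$s are $\bqo$. The inductive step, applied to a putative bad array $f\colon B\to Q_1\oplus Q_2$ with $Q_1,Q_2$ both $\bqo$, 2-colours $B$ by the summand index of $f(s)$ and, using a Nash-Williams style partition argument together with the block-to-barrier conversion reproduced in the proof of Theorem~\ref{thm:Omega+Omega_bqo}, extracts a subbarrier $B'\subseteq B$ on which $f$ lands in a single summand $Q_i$; $\bqo$-ness of $Q_i$ then contradicts badness of $f\restriction B'$. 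Since $\mathbf 1$ is trivially $\bqo$, iteration gives that every $\mathbf n$ is $\bqo$; both the partition step and the block-to-barrier conversion are available because Corollary~\ref{cor:three-aca} upgrades $\rca_0$ to $\aca_0$ under our hypothesis.

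The main obstacle is the internal uniformity in $n$: the statement ``$\mathbf n$ is $\bqo$'' is $\Pi^1_2$ and is therefore not directly amenable to internal induction on $n$. I expect to resolve this by ensuring that the subbarrier extraction in the inductive step is arithmetic in the data $(B,f,n)$, so that the iterated application of the reduction, as $n$ varies with $Q$, is governed by $\Sigma^0_1$ induction on the number of reduction steps, which is available in $\rca_0$ and yields the desired internal quantification over finite quasi orders $Q$.
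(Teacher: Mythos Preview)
Your reduction to the discrete orders $\mathbf n$ is fine, but the inductive step has a genuine gap. The ``Nash-Williams style partition argument'' you invoke---extracting from a $2$-colouring of a barrier $B$ a subbarrier on which the colouring is constant---is the clopen Ramsey theorem, which over $\rca_0$ is equivalent to $\mathsf{ATR}_0$, not to $\aca_0$. The block-to-barrier conversion reproduced in the proof of Theorem~\ref{thm:Omega+Omega_bqo} is a much weaker device (it lives at the level of weak K\H{o}nig's lemma) and does not produce a homogeneous subbarrier; you appear to be conflating the two. Thus Corollary~\ref{cor:three-aca} only hands you $\aca_0$, which is not enough to carry out your partition step. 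Your proposed fix for the $\Pi^1_2$-induction issue founders for the same reason: extracting an infinite homogeneous set from a barrier colouring is not an arithmetic operation in the data, so the iteration cannot be driven by $\Sigma^0_1$-induction as you suggest.

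The paper sidesteps both obstacles in a single non-inductive move: Theorem~\ref{thm:Omega+Omega_bqo} makes $\omega\oplus\omega$ a $\bqo$, Marcone's Theorem~5.4 then makes $P_f(\omega\oplus\omega)$ a $\bqo$, and this one fixed order already contains an antichain of every finite size via the map $q_k\mapsto\{\langle 0,k\rangle,\langle 1,n-k\rangle\}$. Because the target order does not depend on~$n$, the internal quantification over finite~$Q$ is handled by a single computable order-reflecting map rather than by any induction on~$n$.
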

\begin{proof}
Recall the finite powerset construction from the proof of Proposition~\ref{prop:aca-from-oplus}. Given that $\three$ is $\bqo$, the same holds for $P_f(\omega\oplus\omega)$, by the previous theorem and Marcone's Theorem~5.4 in~\cite{marcone-survey-old}. We can conclude since $P_f(\omega\oplus\omega)$ contains a copy of~$\omega\times\omega$, which has antichains of any finite size. To make this more explicit, let $Q=\{q_0,\ldots,q_{n-1}\}$ be a quasi order with $n$ elements. Define $h:Q\to P_f(\omega\oplus\omega)$ by
\begin{equation*}
h(q_k):=\{\langle 0,k\rangle,\langle 1,n-k\rangle\}.
\end{equation*}
Here $\langle i,l\rangle$ refers to the element $l\in\omega$ in the $i$-th summand of $\omega\oplus\omega$. Since the sum\-mands are incomparable, $h(q_k)\leq_{P(\omega\oplus\omega)}h(q_l)$ entails both $k\leq l$ and $n-k\leq n-l$, which yields $k=l$. Hence $h$ is order reflecting, as needed to infer that $Q$ is $\bqo$.
\end{proof}

Let us write $[Y]^\omega$ for the collection of infinite subsets of~$Y$. For~$X\in[\mathbb N]^\omega$~with minimal element~$x_0$, we put $X^-:=X\backslash\{x_0\}$. A function $F:[\mathbb N]^\omega\to Q$ into a quasi order~$Q$ is called bad if there is no $X\in[\mathbb N]^\omega$ with $F(X)\leq_Q F(X^-)$. There is a straightforward correspondence between continuous functions $F:[\mathbb N]^\omega\to Q$ and arrays $f:B\to Q$ that are defined on blocks with base~$\mathbb N$ (cf.~the proof of the next corollary). Over a sufficiently strong base theory, this correspondence can be extended to the case where $F$ is Borel, as shown by Simpson~\cite{simpson-borel-bqos}. In Montalb\'an's analysis of Fra\"iss\'e's conjecture~\cite{montalban-fraisse}, an intermediate notion plays an important role: one says that $Q$ is $\delbqo$ if there is no bad $\Delta^0_2$-function~$F:[\mathbb N]^\omega\to Q$. Over the theory~$\Pi^1_1\text{-}\mathsf{CA}_0$, this is equivalent to $Q$ being $\bqo$ in terms of barriers or blocks, as shown in~\cite{montalban-fraisse}. The same reference gives a coding of $\Delta^0_2$-functions by second order objects, which is available in~$\aca_0$. When we work in~$\rca_0$, we assume that the relation $F(X)=q$ is defined by a $\Sigma^0_2$-formula, possibly with parameters. As usual, an equivalent $\Pi^0_2$-definition is provided by $\forall p\,(p\neq q\to F(X)\neq p)$. The following should dispel further worries about coding.

\begin{corollary}[$\rca_0$]
The principle of arithmetical comprehension follows from the assumption that $\three$ is $\delbqo$.
\end{corollary}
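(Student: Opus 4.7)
My plan is to argue the contrapositive and reduce the statement to Corollary~\ref{cor:three-aca}. Suppose arithmetical comprehension fails; that corollary then supplies a bad array $f:B\to\three$ on some barrier~$B$ with base~$\mathbb N$. The aim is to repackage $f$ as a bad $\Delta^0_2$-function $F:[\mathbb N]^\omega\to\three$, contradicting the hypothesis that $\three$ is $\delbqo$.

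Since every barrier is a block with base~$\mathbb N$, each $X\in[\mathbb N]^\omega$ admits a unique finite initial segment $s_X\in B$, so $F(X):=f(s_X)$ is well defined. For the coding, observe that $F(X)=q$ unfolds to the $\Sigma^0_1$-assertion $\exists s\,(s\in B\land s\sqsubset X\land f(s)=q)$, with $B$ and $f$ as set parameters. This is certainly $\Sigma^0_2$, and equivalent to the $\Pi^0_2$-form $\forall p\,(p\neq q\to F(X)\neq p)$, so $F$ qualifies as a $\Delta^0_2$-function in the sense of the paper's convention and the coding can be carried out inside~$\rca_0$.

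The decisive step is the badness of~$F$. Given $X\in[\mathbb N]^\omega$ with minimum~$x_0$, both $s_X\setminus\{x_0\}$ and $s_{X^-}$ are finite initial segments of~$X^-$, hence $\sqsubseteq$-comparable. If we had $s_{X^-}\sqsubsetneq s_X\setminus\{x_0\}$, then $s_{X^-}\subsetneq s_X$ as subsets of~$\mathbb N$, violating the fact that a barrier is an antichain under~$\subseteq$ (thinness). Hence $s_X\setminus\{x_0\}\sqsubseteq s_{X^-}$, and since $\min s_X=x_0<\min s_{X^-}$ this witnesses $s_X\tl s_{X^-}$ in~$B$. Badness of~$f$ then forces $F(X)=f(s_X)\not\leq_\three f(s_{X^-})=F(X^-)$, so $F$ is bad. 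The main obstacle I anticipate is precisely this translation: verifying that the barrier~$B$ really does force $s_X\tl s_{X^-}$ for every~$X$ and that the construction stays within the paper's $\Delta^0_2$ budget; once those two points are in hand the argument is essentially bookkeeping, which fits with the author's remark that the corollary is mainly intended to dispel coding worries.
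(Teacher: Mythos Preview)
Your argument is correct and follows essentially the same route as the paper's proof: both reduce to Corollary~\ref{cor:three-aca} by converting a barrier array $f:B\to\three$ into the continuous function $F(X)=f(s_X)$ and checking $s_X\tl s_{X^-}$. The paper phrases it directly (show every such $f$ is good, hence $\three$ is $\bqo$) while you take the contrapositive, and you supply more detail than the paper does on the thinness argument for $s_X\tl s_{X^-}$ and on the $\Delta^0_2$ coding; the only small point to tidy is that the barrier produced by the failure of $\bqo$ need not have base~$\mathbb N$, which the paper handles by citing Marcone's result that bases of barriers exist in $\rca_0$.
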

\begin{proof}
In view of Corollary~\ref{cor:three-aca}, it suffices to deduce that $\three$ is $\bqo$. We consider an array $f:B\to\three$ on a barrier~$B$. The latter can be assumed to have base~$\mathbb N$, since the bases of barriers can be formed in~$\rca_0$, due to~\cite[Lemma~1.6]{marcone-survey-old}. We now define $F:[\mathbb N]^\omega\to\three$ by setting $F(X):=f(s)$ for the unique~$s\in B$ with $s\sqsubset X$. Note that~$F$ is computable and in particular~$\Delta^0_2$. Given that $\three$ is $\delbqo$, we find a set $X\in[\mathbb N]^\omega$ with $F(X)=F(X^-)$. For the unique elements $s,t\in B$ with $s\sqsubset X$ and~$t\sqsubset X^-$, we get $s\tl t$ and $f(s)=f(t)$, as needed to confirm that $f$ is good.
\end{proof}

Let us now prove the following variant of Theorem~\ref{thm:Omega+Omega_bqo} from above.

\begin{theorem}[$\rca_0$]
If $\three$ is $\delbqo$, so is $\Omega\oplus\Omega$ for every well order~$\Omega$.
\end{theorem}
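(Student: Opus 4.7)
The plan is to adapt the iteration from the proof of Theorem~\ref{thm:Omega+Omega_bqo} to the $\Delta^0_2$ setting. Arguing contrapositively, I would start from a bad $\Delta^0_2$ function $F:[\mathbb N]^\omega\to\tc$; the order reflecting embedding of $\Omega\oplus\Omega$ into $\tc$ reduces the general case to this one. The aim is to construct a bad $\Delta^0_2$ function $G:[\mathbb N]^\omega\to\three$. Under the standing assumption that $\three$ is $\delbqo$, the preceding corollary makes arithmetical comprehension available throughout.

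Replacing the shift $s\mapsto s^1$ on $\mathcal N$ by the shift $X\mapsto X^-$ on $[\mathbb N]^\omega$, I would set $G_0:=F$ and $G_{n+1}(X):=\mathfrak x(G_n(X),G_n(X^-))$, with $\mathfrak x$ the Skolem function from Lemma~\ref{lem:order-Omega-plus-Omega}. As $\mathfrak x$ is computable, each $G_n$ is uniformly $\Delta^0_2$ in~$n$. An induction on $n$ analogous to the one in the proof of Theorem~\ref{thm:Omega+Omega_bqo} yields $G_n(X)\not\tcleq G_n(X^-)$ for every $X$ and $n$: the step uses the hypothesis at $X$ to get $G_{n+1}(X)\not\tcleq y$ for all $y\in E(G_n(X^-))$, and the hypothesis at $X^-$ to place $G_{n+1}(X^-)$ into $E(G_n(X^-))$.

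I would next verify that $n(X):=\min\{n\mid G_n(X)\in\three\}$ is defined for every $X$. As long as $G_n(X)\notin\three$, Lemma~\ref{lem:E-ordinal-descent} together with $G_{n+1}(X)\in E(G_n(X))$ forces $o(G_{n+1}(X))<o(G_n(X))$, so indefinite escape from $\three$ would supply a strictly descending sequence in $1+\Omega$, contradicting well-foundedness of~$\Omega$; here arithmetical comprehension lets us form the sequence of iterates as an actual set. Since $\mathfrak x(a,b)=a$ for $a\in\three$, once $G_n(X)\in\three$ the recursion stabilizes and $G_m(X)=G_{n(X)}(X)$ for all $m\geq n(X)$.

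Finally, I would set $G(X):=G_{n(X)}(X)$. This has the $\Sigma^0_2$ presentation $G(X)=q\Leftrightarrow q\in\three\wedge\exists n\,G_n(X)=q$ and a matching $\Pi^0_2$ form via $q\in\three$ together with $G_n(X)\neq p$ for every $n$ and every $p\in\three\setminus\{q\}$, so $G$ is $\Delta^0_2$. For badness, picking $N\geq\max(n(X),n(X^-))$ gives $G_N(X)=G(X)$ and $G_N(X^-)=G(X^-)$ by stabilization, and the induction above then yields $G(X)\not\tcleq G(X^-)$. The hard part I anticipate is not the combinatorics, which parallels Theorem~\ref{thm:Omega+Omega_bqo}, but the complexity bookkeeping: ensuring the uniform $\Delta^0_2$-ness of the $G_n$ and of~$G$, and justifying that the $\mu$-operator defining $n(X)$ is everywhere defined by combining well-foundedness of~$\Omega$ with arithmetical comprehension.
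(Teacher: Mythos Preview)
Your proposal is correct and follows essentially the same route as the paper: iterate the Skolem function~$\mathfrak x$ along the shift $X\mapsto X^-$, use well-foundedness of~$\Omega$ together with $\aca_0$ (secured by the preceding corollary) to show the sequence stabilizes in~$\three$, and read off a bad $\Delta^0_2$-function into~$\three$.

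One point worth flagging: the paper explicitly notes that the induction establishing $G_n(X)\not\tcleq G_n(X^-)$ should be formulated, for each fixed set~$X$, as the arithmetical statement ``for all~$i$, $G_n(X^{-i})\not\tcleq G_n(X^{-(i+1)})$''. Your phrasing ``for every $X$ and~$n$'' with the hypothesis invoked separately at~$X$ and at~$X^-$ is a $\Pi^1_1$-induction on the face of it, which $\aca_0$ does not provide; quantifying over the number~$i$ instead of the set~$X^{-i}$ (all of which are uniformly computable from the fixed parameter~$X$) is exactly what makes the step go through in the base theory. This falls under the ``complexity bookkeeping'' you anticipate, but it is the specific place where care is needed.
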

\begin{proof}
Let us assume that $\three$ is $\delbqo$, so that the previous corollary makes arithmetical comprehension available. Aiming at a contradiction, we assume that
\begin{equation*}
F_0:[\mathbb N]^\omega\to\tc\supseteq\Omega\oplus\Omega
\end{equation*}
is a bad $\Delta^0_2$-function. We want to consider the family of functions
\begin{equation*}
F_{n+1}:[\mathbb N]^\omega\to\tc\quad\text{with}\quad F_{n+1}(X):=\mathfrak x(F_n(X),F_n(X^-)),
\end{equation*}
where $\mathfrak x:\tc^2\to\tc$ is given by Lemma~\ref{lem:order-Omega-plus-Omega}. It will be important that the relation $F_n(X)=q$ is $\Delta^0_2$ in the parameters~$n,X$ and $q$. Let us write $X^{-i}$ for~$X$ without its~$i$ smallest elements, so that we have $X^{-0}=X$ and $X^{-(i+1)}=(X^{-i})^-$. It is not hard to see that there is a computable function~$\overline{\mathfrak x}:\mathbb N\to\mathbb N$ with
\begin{equation*}
F_n(X)=\overline{\mathfrak x}(s)\quad\text{for}\quad s=\langle F_0(X),\ldots,F_0(X^{-n})\rangle.
\end{equation*}
Given arithmetical comprehension, a straightforward induction proves the existence of a unique~$s$ as indicated. Since $m\in X^{-i}$ is decidable, we obtain \mbox{$\Delta^0_2$-definitions} of $F_0(X^{-i})=q$, of $s=\langle F_0(X),\ldots,F_0(X^{-n})\rangle$ and finally of \mbox{$F_n(X)=q$}. Analogous to the proof of Theorem~\ref{thm:Omega+Omega_bqo}, an induction on~$n$ shows that $F_n(X^{-i})\not\tcleq F_n(X^{-(i+1)})$ holds for all~$i$. Even though the end result is that $F_n(X)\not\tcleq F_n(X^-)$ holds fo arbitrary~$X$, it is important to have an arithmetical induction statement in which~$X$ is fixed and~$i$ varies. In view of Lemmas~\ref{lem:order-Omega-plus-Omega} and~\ref{lem:E-ordinal-descent}, we get $F_{n+1}(X)\in E(F_n(X))$ and then \mbox{$F_{n+1}(X)=F_n(X)$} when $F_n(X)\in\three$ and $o(F_{n+1}(X))<o(F_n(X))$ otherwise. Using arithmetical comprehension, we can form the function $\overline o:\mathbb N\to 1+\Omega$ with $\overline o(n)=o(F_n(X))$, for each infinite set~$X\subseteq\mathbb N$. Given that~$\Omega$ is well founded, we thus obtain a $\Delta^0_2$-definable function $\overline F:[\mathbb N]^\omega\to\three$ with $\overline F(X)=F_n(X)$ for sufficiently large~$n$. From the above it follows that~$\overline F$ is bad, which contradicts the assumption that $\three$ is $\delbqo$.
\end{proof}

In the proof of Corollary~\ref{cor:three-to-finite}, we have seen that products of $\bqo$s reduce to sums, due to an argument of Marcone~\cite{marcone-survey-old}. The analogous argument for $\delbqo$s yields the following result, which is needed in the next section. Let us recall that $\langle p,q\rangle\leq\langle p',q'\rangle$ in $P\times Q$ is equivalent to the conjunction of $p\leq_P p'$ and $q\leq_Q q'$.

\begin{corollary}[$\rca_0$]\label{cor:prod-delbqo}
If $\three$ is $\delbqo$, so is $\Omega\times\Omega$ for every well order~$\Omega$.
\end{corollary}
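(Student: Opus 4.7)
The plan is to adapt Marcone's product-to-sum reduction from Corollary~\ref{cor:three-to-finite}: the map $(p,q) \mapsto \{\langle 0,p\rangle, \langle 1,q\rangle\}$ embeds $\Omega\times\Omega$ order-reflectingly into $P_f(\Omega\oplus\Omega)$, because the two summands of $\Omega\oplus\Omega$ are incomparable. For $\delbqo$s, rather than developing preservation of $\delbqo$ under $P_f$, I would carry out the analogous reduction directly on bad $\Delta^0_2$-functions, so that the preceding theorem (which gives $\Omega\oplus\Omega$ as $\delbqo$ under our hypothesis) can be applied to derive a contradiction.

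Concretely, suppose towards a contradiction that $F:[\mathbb N]^\omega \to \Omega\times\Omega$ is a bad $\Delta^0_2$-function. Write $F(X) = (F_0(X), F_1(X))$ with $F_0, F_1$ the coordinate projections, which are themselves $\Delta^0_2$. The core construction is to define $G:[\mathbb N]^\omega \to \Omega\oplus\Omega$ by
\[
G(X) := \begin{cases} \langle 0, F_0(X)\rangle & \text{if } F_0(X) \not\leq F_0(X^-),\\ \langle 1, F_1(X)\rangle & \text{otherwise.} \end{cases}
\]
A case analysis on the summands containing $G(X)$ and $G(X^-)$ will verify that $G$ is bad: if these summands differ, incomparability immediately yields $G(X) \not\leq G(X^-)$; if both values lie in the first summand, the defining condition for $G(X) = \langle 0, F_0(X)\rangle$ is exactly $F_0(X) \not\leq F_0(X^-)$; if both values lie in the second summand, then $F_0(X) \leq F_0(X^-)$ must hold, so that badness of $F$ forces $F_1(X) \not\leq F_1(X^-)$. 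Applying the previous theorem to this bad $G$ would then contradict the hypothesis that $\three$ is $\delbqo$.

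The one step that demands genuine attention is checking that $G$ is $\Delta^0_2$. The key observation is that $F_0(X) \leq F_0(X^-)$ is a $\Delta^0_2$-relation: by functionality of $F_0$ and computability of the order on $\Omega$, it admits both a $\Sigma^0_2$ and a $\Pi^0_2$ definition. The case split in the definition of $G$ then preserves $\Delta^0_2$-complexity, allowing $G$ to serve as the desired bad $\Delta^0_2$-function. This complexity bookkeeping is the main routine obstacle I expect to face; no deeper difficulty is anticipated.
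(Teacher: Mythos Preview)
Your proposal is correct and follows essentially the same approach as the paper: both reduce $\Omega\times\Omega$ to $\Omega\oplus\Omega$ via the embedding $(p,q)\mapsto\{\langle 0,p\rangle,\langle 1,q\rangle\}$ and then, for each~$X$, select a component of $F(X)$ that witnesses $F(X)\not\leq F(X^-)$. The paper packages this second step as the slightly more general statement that $P_f(Q)$ is $\delbqo$ whenever $Q$ is (via a selection function~$\mathfrak p:P_f(Q)^2\to Q$), whereas you unfold the construction directly for the specific two-element sets; the underlying argument and the $\Delta^0_2$ bookkeeping are otherwise identical.
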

\begin{proof}
Recall that elements of $\Omega\oplus\Omega$ have the form $\langle i,\alpha\rangle$ with $i\in\{0,1\}$ and~$\alpha\in\Omega$. For the finite powerset construction from the proof of Proposition~\ref{prop:aca-from-oplus}, we get an order reflecting map $f:\Omega\times\Omega\to P_f(\Omega\oplus\Omega)$ by setting $f(\langle\alpha,\beta\rangle):=\{\langle 0,\alpha\rangle,\langle 1,\beta\rangle\}$. In view of the previous theorem, it is thus enough to prove that $P_f(Q)$ is $\delbqo$ whenever the same holds for~$Q$. In order to prove the contrapositive, we assume that $F:[\mathbb N]^\omega\to P_f(Q)$ is a bad $\Delta^0_2$-function. We clearly have a function $\mathfrak p:P_f(Q)^2\to Q$ with the following property: when $a\not\leq b$ holds in~$P_f(Q)$, we have $\mathfrak p(a,b)\in a$ as well as $\mathfrak p(a,b)\not\leq_Q q$ for all~$q\in b$. One readily checks that $\overline F(X):=\mathfrak p(F(X),F(X^-))$ yields a bad $\Delta^0_2$-function $\overline F:[\mathbb N]^\omega\to Q$, as needed for the contrapositive.
\end{proof}

Still over~$\rca_0$, we can conclude that every finite quasi order is $\delbqo$ if the same holds for~$\three$, as in the case of Corollary~\ref{cor:three-to-finite}.

\section{Hereditarily finite sets and trees}\label{sect:ACA-plus}

Above we have considered an order on the collection $H_{\aleph_1}(Q)$ of hereditarily countable sets with urelements from a given quasi order~$Q$. In the present section, we show that $\rca_0$ proves that the subcollection~$H_f(Q)$ of hereditarily finite sets is $\bqo$ whenever the same holds for~$Q$. We derive that $\three$~being $\bqo$ entails arithmetical recursion along~$\mathbb N$. Somewhat parallel, we then consider a collection~$T^w(Q)$ of $Q$-labelled trees with a weak embeddability relation, which has been introduced by Montalb\'an~\cite{montalban-jullien,montalban-fraisse}. As shown by the latter, $\mathsf{ATR}_0$ proves that $T^w(Q)$ is $\bqo$ when $Q$ is $\delbqo$. We show that the analogous result for the subcollection $T^w_f(Q)$ of finite trees is available over~$\rca_0$. Finally, we conclude that arithmetical transfinite recursion follows from $\three$ being $\delbqo$. Let us begin with an official definition of the hereditarily finite sets:

\begin{definition}[$\rca_0$]\label{def:H_f}
We declare that the present definition coincides with Definition~\ref{def:V(Q)} except that $H_{\aleph_1}(Q)$ is replaced by the new definiendum $H_f(Q)$ and that the set $a$ in clause~(ii) is now required to be finite.
\end{definition}

Concerning the formalization in $\rca_0$, we note that the elements of~$H_f(Q)$ are to be viewed as terms (or trees with leaf labels). By recursion over these terms, we obtain the order on $H_f(Q)$ and the (finite) supports $\supp(a)\subseteq Q$ for $a\in H_f(Q)$. We can also define a height function $\hth:H_f(Q)\to\mathbb N$ with $\hth(q):=0$ for $q\in Q$ and
\begin{equation*}
\hth(a):=\max(\{0\}\cup\{\hth(x)+1\,|\,x\in a\})\quad\text{for finite } a\subseteq H_f(Q).
\end{equation*}
To confirm that $H_f(Q)$ is a quasi order, one uses induction on $\hth(r)+\hth(s)+\hth(t)$ to check that $r\leq s$ and $s\leq t$ entail $r\leq t$. Similarly, one can~show that Lemma~\ref{lem:V(Q)-basic} holds over $\rca_0$ when $H_{\aleph_1}(Q)$ is replaced by $H_f(Q)$.

\begin{theorem}[$\rca_0$]\label{thm:H_f}
If $Q$ is $\bqo$, then so is $H_f(Q)$.
\end{theorem}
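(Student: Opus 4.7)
The plan is to imitate the proof of Theorem~\ref{thm:Omega+Omega_bqo}, replacing $\tc$ by $H_f(Q)$, the urelement part $\three\subseteq\tc$ by $Q\subseteq H_f(Q)$, and the ordinal descent along $\Omega$ by descent along $\mathbb{N}$ via the height function $\hth$. The first ingredient is a witness function $\mathfrak{w}:H_f(Q)^2\to H_f(Q)$ analogous to $\mathfrak{x}$. For $a\in H_f(Q)$, set $E(a):=\{a\}$ if $a\in Q$ and $E(a):=a$ (the underlying finite subset of $H_f(Q)$) otherwise. Unpacking Definition~\ref{def:V(Q)}, whenever $a\not\leq_{H(Q)} b$ with $a\notin Q$ there is some $x\in E(a)$ with $x\not\leq_{H(Q)} y$ for all $y\in E(b)$; picking such an $x$ uniformly (e.g.\ of minimal code) yields $\mathfrak{w}(a,b)$, while for $a\in Q$ one sets $\mathfrak{w}(a,b):=a$. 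The analog of Lemma~\ref{lem:E-ordinal-descent} is then immediate: if $x\in E(a)$ and $a\notin Q$, then $\hth(x)<\hth(a)$ by the very definition of $\hth$.

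Given a bad array $f:B\to H_f(Q)$ on a barrier $B$, I would form $\mathcal{B}=\bigcup_{n\geq 1}B^n$ exactly as in Theorem~\ref{thm:Omega+Omega_bqo} and define $g:\mathcal{B}\to H_f(Q)$ by $g(s)=f(s)$ for $s\in B$ and $g(s)=\mathfrak{w}(g(s^0),g(s^1))$ otherwise. The induction of that proof adapts verbatim: $s\tl t$ within a single $B^n$ entails $g(s)\not\leq_{H(Q)} g(t)$, and $g(t)\in E(g(t^0))$ always, whence $g(t)=g(t^0)$ when $g(t^0)\in Q$ and $\hth(g(t))<\hth(g(t^0))$ otherwise. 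Along any infinite $X\subseteq\mathbb{N}$, the sequence $\hth(g(X\{n\}))$ strictly decreases as long as $g(X\{n\})\notin Q$, so well-foundedness of $\mathbb{N}$ forces $g(X\{n\})\in Q$ for some $n$. Setting $B':=\{t\in\mathcal{B}\mid g(t)\in Q\text{ and }g(s)\notin Q\text{ for all }s\sqsubset t\text{ in }\mathcal{B}\}$ yields, by the argument from Theorem~\ref{thm:Omega+Omega_bqo}, a block with base~$\mathbb{N}$ on which $g$ restricts to a bad array into $Q$.

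The main obstacle will be the same block-to-barrier gap that arose in Theorem~\ref{thm:Omega+Omega_bqo}: the construction naturally produces a bad array on the block $B'$, whereas the hypothesis on $Q$ is barrier-$\bqo$. In Theorem~\ref{thm:Omega+Omega_bqo} the conversion was handled by weak K\H{o}nig's lemma, which became available through Corollary~\ref{cor:three-aca}. Here $Q$ is arbitrary, so we cannot invoke that corollary directly. I expect the resolution to mirror Section~\ref{sect:ACA}: first establish a fixed-arity version for arrays $[\mathbb{N}]^n\to H_f(Q)$, where concatenations of $[\mathbb{N}]^n$ remain genuine barriers and no conversion is needed; use this in the intended application to secure arithmetical comprehension and hence weak K\H{o}nig's lemma; and only then upgrade to general barriers via the Cholak--Marcone--Solomon conversion of~\cite{cholak-RM-wpo}. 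Alternatively, a more careful design of $B'$ that exploits the uniqueness of the decomposition $s=s(0)\cup\ldots\cup s(n-1)$ in $\mathcal{B}$ might already produce a barrier, eliminating the need for conversion altogether.
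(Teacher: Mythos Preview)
Your setup is essentially the paper's: define the witness $\mathfrak w$ via the membership structure, iterate along $\mathcal B=\bigcup B^n$, and use the height function in place of the ordinal map~$o$. The induction showing that $s\tl t$ in a fixed $B^n$ gives $g(s)\not\leq g(t)$ goes through exactly as you say, and the descent $\hth(g(t))<\hth(g(t^0))$ (unless $g(t^0)\in Q$) is correct.

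The block-to-barrier gap you isolate is indeed the whole point, but neither of your proposed resolutions is what the paper does, and your option~(a) does not prove the theorem as stated. The theorem asserts, in $\rca_0$, that $H_f(Q)$ is barrier-$\bqo$ for \emph{every} $Q$ that is barrier-$\bqo$; you cannot bootstrap through ``the intended application,'' since the application invokes the theorem rather than the other way around, and for an arbitrary $Q$ there is no route to arithmetical comprehension. So the Cholak--Marcone--Solomon conversion is simply unavailable here.

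Your option~(b) is the right direction, but the mechanism is more specific than ``uniqueness of the decomposition.'' The decisive difference from Theorem~\ref{thm:Omega+Omega_bqo} is that the descent lives in $\mathbb N$, so one can compute \emph{in advance} how many concatenation steps are needed. The paper sets
\[
n(t):=\max\{\hth(f(s))\mid s\in B,\ s\subseteq\{0,\ldots,\max(t)\}\}
\]
and takes $B':=\{s(0)\cup\ldots\cup s(n)\mid s(0)\tl\ldots\tl s(n),\ s(i)\in B,\ n=n(s(0))\}$. Since $\hth(g(s(0)))\le n(s(0))$ and height strictly drops until $Q$ is reached (after which $g$ stabilises), $g$ restricts to a map $B'\to Q$. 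Because $n(s(0))$ depends only on $\max(s(0))$ and is monotone in it, one checks directly that $B'$ is a barrier: if $s(0)\cup\ldots\cup s(m)\subset t(0)\cup\ldots\cup t(n)$ then $\max(s(0))\ge\max(t(0))$ forces $m\ge n$, leading to $s(n)\subset t(n)$ inside the barrier $B$, a contradiction. No weak K\H{o}nig's lemma, no block conversion. This computable a~priori bound is the missing idea in your sketch.
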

\begin{proof}
To show the contrapositive, we assume that $f:B\to H_f(Q)$ is a bad array for some barrier~$B$. From the proof of Theorem~\ref{thm:Omega+Omega_bqo}, we adopt the definitions of $B^n$ and $\mathcal B$ as well as $s^0$ and $s^1$. By recursion over sequences, we define $g:\mathcal B\to H_f(Q)$ as follows. First, we put $g(s):=f(s)$ for $s\in B^1=B$. Secondly, we set $g(s):=g(s^0)$ for $s\in\mathcal B\backslash B^1$ with $g(s^0)\in Q\subseteq H_f(Q)$ or $g(s^0)\leq g(s^1)$. Here the second disjunct is included to cover all potential cases, even though we will see that it always fails. Finally, for $s\in\mathcal B\backslash B^1$ with $g(s^0)\notin Q$ and $g(s^0)\not\leq g(s^1)$, we invoke the definition of the order on $H_f(Q)$ to pick $g(s)\in g(s^0)$ such that the following holds:
\begin{enumerate}[label=(\roman*)]
\item in case $g(s^1)\in Q$ we have $g(s)\not\leq g(s^1)$,
\item otherwise we have $g(s)\not\leq y$ for all $y\in g(s^1)$.
\end{enumerate}
By induction on~$n$, we show that $s\tl t$ entails $g(s)\not\leq g(t)$ when $s$ and $t$ lie in the same set~$B^{n+1}$. For $n=0$ it suffices to invoke the assumption that~$f$ is bad. In the induction step, we first assume that we have $g(s^0)\in Q$ and hence $g(s)=g(s^0)$. By construction we have either $g(t)=g(t^0)$ or $g(t)\in g(t^0)$, so that we get $g(t)\leq g(t^0)$ by the version of Lemma~\ref{lem:V(Q)-basic} that is available in~$\rca_0$. If we did have $g(s)\leq g(t)$, we would thus get $g(s^0)\leq g(t^0)$. The latter contradicts the induction hypothesis, since we have $s^0\tl s^1=t^0$ as in the proof of Theorem~\ref{thm:Omega+Omega_bqo}. Now assume $g(s^0)\notin Q$. The induction hypothesis ensures $g(s^0)\not\leq g(s^1)$, so that we have $g(s)\in g(s^0)$ with properties~(i) and~(ii) from above. In case we have $g(s^1)=g(t^0)\in Q$, we can invoke~(i) to obtain $g(s)\not\leq g(t^0)=g(t)$. For $g(t^0)\notin Q$, we first use the induction hy\-poth\-e\-sis to get $g(t^0)\not\leq g(t^1)$ and hence $g(t)\in g(t^0)=g(s^1)$. Then property~(ii) yields $g(s)\not\leq g(t)$, as desired. For $t\in B$ with largest element $\max(t)$, we now set
\begin{equation*}
n(t):=\max\big\{\hth(f(s))\,|\,s\in B\text{ and }s\subseteq\{0,\ldots,\max(t)\}\big\},
\end{equation*}
where $\hth:H_f(Q)\to\mathbb N$ is the height function from above. Consider
\begin{equation*}
B':=\{s(0)\cup\ldots\cup s(n)\,|\, s(0)\tl\ldots\tl s(n)\text{ with }s(i)\in B\text{ for }i\leq n=n(s(0))\}.
\end{equation*}
Based on the proof of Theorem~\ref{thm:Omega+Omega_bqo}, it is straightforward to check that $B'$ is a block with the same base as~$B$. To show that $B'$ is a barrier, we argue by contradiction. Assume that we have $s(0)\cup\ldots\cup s(m)\subset t(0)\cup\ldots\cup t(n)$ in $B'$, with the given conditions on the $s(i)$ and $t(j)$. As $B$ is a barrier, we cannot have $s(0)\subset t(0)$, so that we get $\max(s(0))\geq\max(t(0))$ and hence $m=n(s(0))\geq n(t(0))=n$.~Induction on $i\leq n$ yields $s(i)\cup\ldots\cup s(m)\subset t(i)\cup\ldots\cup t(n)$ and in particular $s(n)\subset t(n)$, against the assumption that $B$ is a barrier. For $s(0)\tl\ldots\tl s(n)$ with $s(i)\in B$, we now observe that our construction yields
\begin{alignat*}{3}
g(s(0)\cup\ldots\cup s(i+1))&={}&&g(s(0)\cup\ldots\cup s(i)) \quad &&\text{when } g(s(0)\cup\ldots\cup s(i))\in Q,\\
g(s(0)\cup\ldots\cup s(i+1))&\in{}&& g(s(0)\cup\ldots\cup s(i))\quad &&\text{otherwise}.
\end{alignat*}
As $i$ increases, the height of $g(s(0)\cup\ldots\cup s(i))$ will thus decrease until an element of~$Q$ is reached. In view of $\hth(g(s(0)))\leq n(s(0))$, it follows that $g$ restricts to a function from~$B'$ to~$Q$. As in the proof of Theorem~\ref{thm:Omega+Omega_bqo}, the latter is a bad array, which completes our proof of the contrapositive.
\end{proof}

To derive arithmetical recursion along~$\mathbb N$, we will show that a certain order $\varepsilon_\Omega$ is well founded for any well order~$\Omega$ (cf.~the introduction). The idea is to define an order reflecting map from $\varepsilon_\Omega$ into~$H_f(Q)$, for an order $Q=(\omega^2\cdot\Omega)\oplus\one$ that will be explained below. Given that $\three$ is $\bqo$, we can invoke Theorems~\ref{thm:Omega+Omega_bqo} and~\ref{thm:H_f} to conclude that the same holds for~$H_f(Q)$. To motivate the definition of $\varepsilon_\Omega$, we recall that an \mbox{$\varepsilon$-}number is a fixed point of the map $t\mapsto\omega^t$ on the ordinals. Any ordinal~$t$ that is not an $\varepsilon$-number has a unique Cantor normal form $t=\omega^{t_0}+\ldots+\omega^{t_{n-1}}$ with $t_{n-1}\preceq\ldots\preceq t_0\prec t$. The following definition captures this structure.

\begin{definition}[$\rca_0$]\label{def:eps_Omega}
Given a linear order~$\Omega$, we use simultaneous recursion to define a set $\varepsilon_\Omega$ of terms and a binary relation $\prec$ on this set. The terms in $\varepsilon_\Omega$ are generated by the following clauses:
\begin{enumerate}[label=(\roman*)]
\item for each $\alpha\in\Omega$ we have a term $\varepsilon_\alpha\in\varepsilon_\Omega$,
\item we get $\langle t_0,\ldots,t_{n-1}\rangle\in\varepsilon_\Omega$ for any previously constructed $t_i\in\varepsilon_\Omega$ such that
\begin{itemize}
\item either $n=1$ and $t_0$ is not of the form $\varepsilon_\alpha$,
\item or $n\in\{0\}\cup\{2,3,\ldots\}$ and $t_i\preceq t_{i-1}$ for $0<i<n$, where $s\preceq t$ denotes that we have $s\prec t$ or $s$ and $t$ are the same term.
\end{itemize}
\end{enumerate}
In order to define our binary relation, we stipulate that $s\prec t$ holds precisely when one of the following clauses applies:
\begin{enumerate}[label=(\roman*')]
\item $s=\varepsilon_\alpha$ and $t=\varepsilon_\beta$ with $\alpha<\beta$ in~$\Omega$,
\item $s=\varepsilon_\alpha$ and $t=\langle t_0,\ldots,t_{n-1}\rangle$ with $n>0$ and $s\preceq t_0$,
\item $s=\langle s_0,\ldots,s_{m-1}\rangle$ and $t=\varepsilon_\beta$ with $m=0$ or $s_0\prec t$,
\item $s=\langle s_0,\ldots,s_{m-1}\rangle$ and $t=\langle t_0,\ldots,t_{n-1}\rangle$ with
\begin{itemize}
\item either $m<n$ and $s_i=t_i$ for all $i<m$,
\item or there is $j<\min(m,n)$ with $s_j\prec t_j$ and $s_i=t_i$ for all $i<j$.
\end{itemize}
\end{enumerate}
\end{definition}

To justify the recursion, we consider the length function $\len:\varepsilon_\Omega\to\mathbb N$~with
\begin{equation*}
\len(\varepsilon_\alpha):=0\quad\text{and}\quad\len(\langle t_0,\ldots,t_{n-1}\rangle):=1+\textstyle\sum_{i<n}\len(t_i).
\end{equation*}
One can decide $t\in\varepsilon_\Omega$ and $r\prec s$ by simultaneous recursion on $\len(t)$ and $\len(r)+\len(s)$, respectively, where the recursive calls do not lead out of a finite set of subterms. For an inductive proof that $\prec$ is a linear order on $\varepsilon_\Omega$, one can also employ the given length function (see, e.\,g., Lemma~2.3 of~\cite{rathjen-afshari}). To decide $s\prec t$ one may first look at the largest subterm of the form $\varepsilon_\alpha$ and then at the number of exponentials that are applied to it. More precisely, these invariants are defined as follows.

\begin{definition}[$\rca_0$]
Consider a linear order $\Omega$ with a smallest element that is denoted by~$0$. We define functions $e:\varepsilon_\Omega\to\Omega$ and $d:\varepsilon_\Omega\to\mathbb N$ recursively by
\begin{alignat*}{3}
e(\varepsilon_\alpha)&:=\alpha,&\qquad e(\langle\rangle)&:=0,&\qquad e(\langle t_0,\ldots,t_n\rangle)&:=e(t_0),\\
d(\varepsilon_\alpha)&:=0,&\qquad d(\langle\rangle)&:=0,&\qquad d(\langle t_0,\ldots,t_n\rangle)&:=d(t_0)+1.
\end{alignat*}
\end{definition}

As indicated above, our invariants have the following relation with the order.

\begin{lemma}[$\rca_0$]\label{lem:exp-depth}
Assume that we have $s\preceq t$ in $\varepsilon_\Omega$, for some $\Omega$ with a smallest element. We then get $e(s)\leq_\Omega e(t)$. If the latter is an equality, we get $d(s)\leq d(t)$.
\end{lemma}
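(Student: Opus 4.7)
The plan is to prove both statements jointly by induction on $\len(s) + \len(t)$. The case $s = t$ is immediate, so we focus on $s \prec t$ and case-split on which of clauses (i')--(iv') of Definition~\ref{def:eps_Omega} witnesses the strict inequality. Throughout, the recursive definitions $e(\langle t_0, \ldots\rangle) = e(t_0)$ and $d(\langle t_0, \ldots\rangle) = d(t_0) + 1$ will transport the inductive hypothesis outward.

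Clauses (i'), (ii'), and (iv') are the routine ones. Clause (i') yields $e(s) = \alpha < \beta = e(t)$ directly, making the $d$-assertion vacuous. Clause (ii'), applied to $t = \langle t_0, \ldots, t_{n-1}\rangle$ with the premise $\varepsilon_\alpha \preceq t_0$, gives $e(s) \leq e(t_0) = e(t)$ by the inductive hypothesis, and in the equality case one has $d(s) = 0 \leq d(t_0) + 1 = d(t)$. For clause (iv'), where both $s$ and $t$ are sums, one either recognises $s$ as a proper initial segment of $t$ (so the first components coincide and transport $e$ and $d$ identically, with the edge case $m = 0$ handled directly by $e(s) = 0 \leq e(t_0)$) or locates the first position $j$ at which $s_j \prec t_j$; the common prefix $s_i = t_i$ for $i < j$ reduces the problem to $j = 0$, where the inductive hypothesis on $s_0 \prec t_0$ transfers through the recursive formulas.

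The main case is clause (iii'), with $s = \langle s_0, \ldots, s_{m-1}\rangle$ and $t = \varepsilon_\beta$. When $m = 0$ both inequalities are immediate, since $e(s) = 0 \leq_\Omega \beta$ and $d(s) = 0 = d(t)$. For $m > 0$ with $s_0 \prec \varepsilon_\beta$, the inductive hypothesis gives $e(s_0) \leq_\Omega \beta$, hence $e(s) = e(s_0) \leq e(t)$. The main obstacle is the equality case $e(s_0) = \beta$: the inductive hypothesis then also yields $d(s_0) \leq d(\varepsilon_\beta) = 0$, forcing $d(s_0) = 0$, while the defining recursion would give $d(s) = d(s_0) + 1 = 1$, which must be reconciled with $d(t) = 0$. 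Handling this requires a finer propagation analysis of $e$ through iterated (iii')-witnesses: one traces the chain of successive first components produced by repeated applications of (iii') down to a base term and observes that any such chain terminates in an $\varepsilon_\gamma$ or $\langle\rangle$ whose $e$-value is strictly below $\beta$, so that the scenario $e(s_0) = \beta$ with $s_0 \prec \varepsilon_\beta$ is actually incompatible with the structure of a (iii')-witness. Once this tail-end bookkeeping is in place, the remaining inductive assembly is straightforward.
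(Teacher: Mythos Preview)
Your overall strategy --- induction on $\len(s)+\len(t)$ with a case split on clauses (i')--(iv') and the identification of (iii') as the only substantive case --- matches the paper's. The difference is in how (iii') is handled. Rather than arguing inside the main induction, the paper first isolates an auxiliary fact, proved by a separate structural induction on~$s$: if $\beta\leq_\Omega e(s)$ and $s\neq\langle\rangle$, then $\varepsilon_\beta\preceq s$. The equality subcase of~(iii') then collapses in one line: from $e(s)=\beta$ and $s=\langle s_0,\ldots,s_{m-1}\rangle$ with $m>0$ one obtains $t=\varepsilon_\beta\preceq s$, which contradicts $s\prec t$ by linearity of~$\prec$.

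Your ``chain tracing'' paragraph is a gesture towards the same auxiliary fact, but the precise assertion you make --- that the chain terminates in an $\varepsilon_\gamma$ or $\langle\rangle$ whose $e$-value is \emph{strictly} below~$\beta$, so that $e(s_0)=\beta$ with $s_0\prec\varepsilon_\beta$ is impossible --- is not correct. Take $s_0=\langle\rangle$ and $\beta=0$ (the least element of~$\Omega$): then $s_0\prec\varepsilon_\beta$ holds by clause~(iii') with $m=0$, yet $e(s_0)=0=\beta$. So the configuration you declare ``incompatible with the structure of a (iii')-witness'' does occur, and your sketch leaves a genuine gap at precisely the point you flagged as needing ``tail-end bookkeeping''. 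You should recast the argument along the paper's lines, aiming directly for the order-theoretic conclusion $\varepsilon_\beta\preceq s$ rather than for a strict $e$-bound at the terminal node of the chain; note that even in that formulation the step where the iterated first component equals~$\langle\rangle$ deserves explicit attention.
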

\begin{proof}
By induction on~$s$, one can check that $\beta\leq_\Omega e(s)$ and $s\neq\langle\rangle$ entail $\varepsilon_\beta\preceq s$. Based on this fact, we can verify the claim by induction on~$\len(s)+\len(t)$. The only interesting case is clause~(iii') of Definition~\ref{def:eps_Omega}. Given $s_0\prec t=\varepsilon_\beta$, we inductively get $e(s)=e(s_0)\leq_\Omega e(t)=\beta$. If this was an equality, the above would yield $t\preceq s$ and hence $s=t$, which is impossible in clause~(iii').
\end{proof}

Let us write $\omega^2\cdot\Omega$ for the linear order with underlying set $\Omega\times\mathbb N^2$ and
\begin{equation*}
\langle\alpha,m,n\rangle<_{\omega^2\cdot\Omega}\langle\alpha',m',n'\rangle\quad\Leftrightarrow\quad\begin{cases}
\text{either }\alpha<_\Omega\alpha',\\
\text{or }\alpha=\alpha'\text{ and }m<m',\\
\text{or }\alpha=\alpha'\text{ and }m=m'\text{ and }n<n'.
\end{cases}
\end{equation*}
Given $\langle\alpha_0,m_0,n_0\rangle\geq\langle\alpha_1,m_1,n_1\rangle\geq\ldots$ in $\omega^2\cdot\Omega$, we obtain $\alpha_0\geq\alpha_1\geq\ldots$ in~$\Omega$. When the latter is a well order, we thus find an $N\in\mathbb N$ with $\alpha_i=\alpha_N$ for all~$i\geq N$. Similarly, we then learn that the $m_i$ and the $n_i$ must eventually become constant. This shows that $\omega^2\cdot\Omega$ is a well order if the same holds for~$\Omega$, provably in $\rca_0$. In terms of the given order, the lemma above asserts
\begin{equation*}
s\preceq t\quad\Rightarrow\quad\langle e(s),d(s),0\rangle\leq_{\omega^2\cdot\Omega}\langle e(t),d(t),0\rangle.
\end{equation*}
We write $\one$ for the order with a single element, which we denote~by~$\star$. According to the previous section, the partial order $(\omega^2\cdot\Omega)\oplus\one$ with incomparable summands has underlying set $\{\langle 0,q\rangle\,|\,q\in\omega^2\cdot\Omega\}\cup\{\langle 1,\star\rangle\}$. To improve readability, we will write $\overline q$ and $\star$ at the place of $\langle 0,q\rangle$ and $\langle 1,\star\rangle$, where $q$ has the form $\langle\alpha,m,n\rangle\in\omega^2\cdot\Omega$.

\begin{definition}[$\rca_0$]
For a linear order~$\Omega$ with a minimal element, we define a function $\mathfrak j:\varepsilon_\Omega\to H_f((\omega^2\cdot\Omega)\oplus\one)$ recursively by $\mathfrak j(\varepsilon_\alpha):=\{\overline{\langle\alpha,0,0\rangle},\star\}$ and
\begin{equation*}
\mathfrak j(t):=\{\star\}\cup\left\{\left.\left\{\overline{\langle e(t),d(t),i\rangle},\mathfrak j(t_i)\right\}\,\right|\,i<n\right\}\quad\text{for}\quad t=\langle t_0,\ldots,t_{n-1}\rangle.
\end{equation*}
\end{definition}

The following is readily verified by induction on~$t\in\varepsilon_\Omega$.

\begin{lemma}[$\rca_0$]\label{lem:j-exp-depth}
Given $\overline{\langle\alpha,m,0\rangle}\leq\mathfrak j(t)$, we get $\langle\alpha,m,0\rangle\leq\langle e(t),d(t),0\rangle$.
\end{lemma}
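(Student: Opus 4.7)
The plan is to proceed by induction on the term $t\in\varepsilon_\Omega$, unfolding the definition of $\mathfrak j(t)$ and of the order on $H_f((\omega^2\cdot\Omega)\oplus\one)$ in each case. Throughout, I will use that $\star$ and $\overline{\langle\alpha,m,0\rangle}$ lie in different summands of $(\omega^2\cdot\Omega)\oplus\one$ and are therefore incomparable, so no urelement $\star$ appearing inside some finite set can witness $\overline{\langle\alpha,m,0\rangle}\leq y$.

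In the base case $t=\varepsilon_\gamma$, we have $\mathfrak j(t)=\{\overline{\langle\gamma,0,0\rangle},\star\}$ with $e(t)=\gamma$ and $d(t)=0$. The hypothesis $\overline{\langle\alpha,m,0\rangle}\leq\mathfrak j(t)$ forces $\overline{\langle\alpha,m,0\rangle}\leq y$ for some $y\in\mathfrak j(t)$; since $y=\star$ is ruled out by incomparability, $y=\overline{\langle\gamma,0,0\rangle}$, which gives the conclusion directly. For $t=\langle\rangle$, the hypothesis is vacuous by the same incomparability argument applied to $\mathfrak j(\langle\rangle)=\{\star\}$.

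In the inductive step $t=\langle t_0,\ldots,t_{n-1}\rangle$ with $n\geq 1$, the hypothesis yields $\overline{\langle\alpha,m,0\rangle}\leq y$ for some $y\in\mathfrak j(t)$, and $y\neq\star$. Thus $y=\{\overline{\langle e(t),d(t),i\rangle},\mathfrak j(t_i)\}$ for some $i<n$, and there is $z\in y$ with $\overline{\langle\alpha,m,0\rangle}\leq z$. If $z=\overline{\langle e(t),d(t),i\rangle}$, then $\langle\alpha,m,0\rangle\leq\langle e(t),d(t),i\rangle$ in $\omega^2\cdot\Omega$, and comparing first two coordinates gives $\langle\alpha,m,0\rangle\leq\langle e(t),d(t),0\rangle$ as required. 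If instead $z=\mathfrak j(t_i)$, the induction hypothesis provides $\langle\alpha,m,0\rangle\leq\langle e(t_i),d(t_i),0\rangle$, and it remains to show $\langle e(t_i),d(t_i),0\rangle\leq\langle e(t),d(t),0\rangle$.

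For the last step I invoke Lemma~\ref{lem:exp-depth} applied to $t_i\preceq t_0$ (which holds since $t_{n-1}\preceq\ldots\preceq t_0$): it yields $e(t_i)\leq_\Omega e(t_0)=e(t)$. If this inequality is strict, we are done. Otherwise Lemma~\ref{lem:exp-depth} gives $d(t_i)\leq d(t_0)$, and since $d(t)=d(t_0)+1$ by definition we have $d(t_i)<d(t)$, again yielding $\langle e(t_i),d(t_i),0\rangle<\langle e(t),d(t),0\rangle$. The conclusion follows by transitivity. I do not foresee a serious obstacle: the argument is a direct unfolding of definitions, with the one non-trivial point being the strict drop in the $d$-coordinate, which is exactly what the $+1$ in the definition of $d(\langle t_0,\ldots,t_n\rangle)$ is designed to ensure.
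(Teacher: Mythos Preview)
Your proof is correct and follows precisely the approach indicated by the paper, which merely states that the lemma ``is readily verified by induction on~$t\in\varepsilon_\Omega$'' without giving details. Your case analysis and use of Lemma~\ref{lem:exp-depth} (together with $d(t)=d(t_0)+1$) supply exactly the missing verification.
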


We now derive the result that was promised above.

\begin{theorem}[$\rca_0$]
The function $\mathfrak j:\varepsilon_\Omega\to H_f((\omega^2\cdot\Omega)\oplus\one)$ is order reflecting, for any linear order~$\Omega$ that has a minimal element.
\end{theorem}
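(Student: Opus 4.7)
The plan is to prove by induction on $\len(s)+\len(t)$ that $\mathfrak j(s)\leq_{H(Q)} \mathfrak j(t)$ entails $s\preceq t$, splitting into four cases by whether each of $s,t$ has the form $\varepsilon_\alpha$ or $\langle u_0,\ldots,u_{k-1}\rangle$, where $Q=(\omega^2\cdot\Omega)\oplus\one$. The guiding structural observation is that every $\mathfrak j(u)$ contains $\star$, and that for tuple-shaped $u$ the non-$\star$ elements of $\mathfrak j(u)$ are exactly the two-element sets $\{\overline{\langle e(u),d(u),i\rangle},\mathfrak j(u_i)\}$ for $i<k$. Because $\star\in \mathfrak j(u_i)$ while $\star$ and any $\overline q$ are incomparable in $Q$, no such two-element set can be $\leq_{H(Q)}$ an urelement. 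Consequently, in the unfolding of $\mathfrak j(s)\leq_{H(Q)} \mathfrak j(t)$ via Definition~\ref{def:V(Q)}, the two-element subsets on either side must be matched with two-element subsets on the other.

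Three of the four cases are then short. If $s=\varepsilon_\alpha$ and $t=\varepsilon_\beta$, then $\overline{\langle\alpha,0,0\rangle}\in \mathfrak j(s)$ must be dominated in $\{\overline{\langle\beta,0,0\rangle},\star\}$, which forces $\alpha\leq_\Omega\beta$. If $s$ is a tuple and $t=\varepsilon_\beta$, the observation above rules out every two-element subset of $\mathfrak j(s)$, so $s=\langle\rangle$ and clause~(iii') of Definition~\ref{def:eps_Omega} applies. If $s=\varepsilon_\alpha$ and $t=\langle t_0,\ldots,t_{n-1}\rangle$ is a tuple, then Lemma~\ref{lem:j-exp-depth} applied to the $\mathfrak j(t_i)$, together with the direct label inequality, yields $\alpha\leq_\Omega e(t_0)$, and the auxiliary fact from the proof of Lemma~\ref{lem:exp-depth} lifts this to $\varepsilon_\alpha\preceq t_0$, whence clause~(ii') gives $s\preceq t$.

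The main case is both $s=\langle s_0,\ldots,s_{m-1}\rangle$ and $t=\langle t_0,\ldots,t_{n-1}\rangle$, where matching two-element subsets defines a function $\sigma:\{0,\ldots,m-1\}\to\{0,\ldots,n-1\}$. The set-component of each match yields $\mathfrak j(s_i)\leq \mathfrak j(t_{\sigma(i)})$, hence $s_i\preceq t_{\sigma(i)}$ by induction; the urelement-component, combined with Lemma~\ref{lem:j-exp-depth}, yields $\langle e(s),d(s),i\rangle\leq_{\omega^2\cdot\Omega}\langle e(t),d(t),\sigma(i)\rangle$ for each $i$. If $e(s)<_\Omega e(t)$, then $s_0\preceq t_{\sigma(0)}\preceq t_0$ combined with the strict inequality of their $e$-values forces $s_0\prec t_0$, and clause~(iv') closes at $j=0$; the subcase $e(s)=e(t)$ with $d(s)<d(t)$ is symmetric. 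Otherwise $e(s)=e(t)$ and $d(s)=d(t)$, so the third coordinate of the label forces $i\leq\sigma(i)$ for every $i<m$; combining this with the decreasing condition $t_{\sigma(i)}\preceq t_i$ from Definition~\ref{def:eps_Omega} upgrades $s_i\preceq t_{\sigma(i)}$ to $s_i\preceq t_i$, while $\sigma(m-1)<n$ forces $m\leq n$, and a standard lexicographic analysis (examining the least $j$ with $s_j\prec t_j$, or falling through to $m<n$ or $m=n$) delivers clause~(iv').

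The main obstacle will be this last subcase, where four pieces of information---the inductive comparisons $s_i\preceq t_{\sigma(i)}$, the positional inequality $i\leq\sigma(i)$ from label arithmetic, the decreasing pattern of the $t_i$ required by Definition~\ref{def:eps_Omega}, and the size bound $m\leq n$---must be woven together into the strict lexicographic comparison demanded by clause~(iv'). This is precisely the point at which encoding the position $i$ into the third coordinate of the labels in the definition of $\mathfrak j$ becomes indispensable.
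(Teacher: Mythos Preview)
Your proposal is correct and follows essentially the same approach as the paper: induction on $\len(s)+\len(t)$ with the same four-case split, the same use of $\star\in\supp(\mathfrak j(u))$ to force the two-element sets to match with two-element sets, and the same use of the index~$i$ in the third label-coordinate to recover the lexicographic comparison in clause~(iv'). The only differences are organizational: in the case $s=\varepsilon_\alpha$, $t$ a tuple, you route through $\alpha\leq e(t_0)$ and the auxiliary fact from the proof of Lemma~\ref{lem:exp-depth} applied to~$t_0$, whereas the paper splits into two subcases and either applies that fact to~$t$ itself or concludes $s\prec t$ directly from Lemma~\ref{lem:exp-depth} by linearity; similarly, in Case~4 with $e(s)<e(t)$ or $d(s)<d(t)$ the paper invokes linearity via Lemma~\ref{lem:exp-depth} where you instead use the induction hypothesis on $s_0,t_{\sigma(0)}$ together with clause~(iv') at $j=0$.
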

\begin{proof}
To show that $\mathfrak j(s)\leq\mathfrak j(t)$ entails $s\preceq t$, we argue by induction on~$\len(s)+\len(t)$ and distinguish cases according to the clauses from Definition~\ref{def:eps_Omega}. For the induction step, we assume $\mathfrak j(s)\leq\mathfrak j(t)$. If we have $s=\varepsilon_\alpha$ and $t=\varepsilon_\beta$, the claim is immediate. Now assume we have $s=\varepsilon_\alpha$ and $t=\langle t_0,\ldots,t_{n-1}\rangle$. In view of $\overline{\langle\alpha,0,0\rangle}\not\leq\star$, there must be an $i<n$ with
\begin{equation*}
\overline{\langle\alpha,0,0\rangle}\leq\left\{\overline{\langle e(t),d(t),i\rangle},\mathfrak j(t_i)\right\}.
\end{equation*}
If this inequality is due to $\langle\alpha,0,0\rangle\leq\langle e(t),d(t),i\rangle$, then we have $\alpha\leq e(t)$, so that we get $s=\varepsilon_\alpha\preceq t$ as in the proof of Lemma~\ref{lem:exp-depth}. Otherwise we have $\overline{\langle\alpha,0,0\rangle}\leq\mathfrak j(t_i)$. As Definition~\ref{def:eps_Omega} ensures $t_i\preceq t_0$, we can use Lemmas~\ref{lem:exp-depth} and~\ref{lem:j-exp-depth} to infer
\begin{equation*}
\langle e(s),d(s),0\rangle=\langle\alpha,0,0\rangle\leq\langle e(t_i),d(t_i),0\rangle\leq\langle e(t_0),d(t_0),0\rangle<\langle e(t),d(t),0\rangle.
\end{equation*}
We now get $s\prec t$ by Lemma~\ref{lem:exp-depth}, as the order~$\prec$ on $\varepsilon_\Omega$ is linear. Let us continue with terms of the form $s=\langle s_0,\ldots,s_{m-1}\rangle$ and $t=\varepsilon_\beta$. We may assume $m>0$, for otherwise $s\prec t$ is immediate. Note that we have $\star\in\supp(\mathfrak j(s_0))$ and hence
\begin{equation*}
\left\{\overline{\langle e(s),d(s),0\rangle},\star\right\}\subseteq\supp(x)\quad\text{for}\quad x:=\left\{\overline{\langle e(s),d(s),0\rangle},\mathfrak j(s_0)\right\}\in\mathfrak j(s).
\end{equation*}
Due to Lemma~\ref{lem:V(Q)-basic} (see also the paragraph after Definition~\ref{def:H_f}), we get $x\not\leq\overline{\langle\beta,0,0\rangle}$ as well as $x\not\leq\star$. This entails $\mathfrak j(s)\not\leq\mathfrak j(\varepsilon_\beta)=\mathfrak j(t)$, against our assumption. Finally, let $s=\langle s_0,\ldots,s_{m-1}\rangle$ and $t=\langle t_0,\ldots,t_{n-1}\rangle$. For each $i<m$ we find $k<n$ with
\begin{equation*}
\left\{\overline{\langle e(s),d(s),i\rangle},\mathfrak j(s_i)\right\}\leq\left\{\overline{\langle e(t),d(t),k\rangle},\mathfrak j(t_k)\right\}.
\end{equation*}
If we have $\overline{\langle e(s),d(s),i\rangle}\leq\mathfrak j(t_k)$, then we get $s\prec t$ via Lemmas~\ref{lem:exp-depth} and~\ref{lem:j-exp-depth} as before. So now assume $\langle e(s),d(s),i\rangle\leq\langle e(t),d(t),k\rangle$. In particular, we obtain $e(s)\leq e(t)$. If this inequality is strict, we again get $s\prec t$ due to Lemma~\ref{lem:j-exp-depth}. We may thus assume $e(s)=e(t)$ and then also $d(s)=d(t)$, for the same reason. The point is that we then get $i\leq k$. We can now adopt an argument of Marcone~\cite{marcone-survey-old}, which was already presented in the proof of Proposition~\ref{prop:aca-from-oplus}. To make this explicit, we first note that $\star\in\supp(\mathfrak j(s_i))$ entails $\mathfrak j(s_i)\not\leq\overline{\langle e(t),d(t),k\rangle}$. We must thus have $\mathfrak j(s_i)\leq\mathfrak j(t_k)$, which inductively yields $s_i\preceq t_k\preceq t_i$. Since this holds for any $i<m\leq n$, we can conclude $s\preceq t$ by clause~(iv') of Definition~\ref{def:eps_Omega}.
\end{proof}

The following bound on $\three$ being~$\bqo$, which improves the one from Corollar~\ref{cor:three-aca}, is one of the main results of the present paper. We note that arithmetical recursion along~$\mathbb N$ is equivalent to the existence of~$\omega$-jumps.

\begin{corollary}[$\rca_0$]\label{cor:three-to-acaplus}
The statement that $\three$ is $\bqo$ entails arithmetical recursion along the order~$\mathbb N$ (which is the central axiom of $\aca_0^+$).
\end{corollary}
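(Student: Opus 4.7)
The plan is to combine the three preceding theorems with the Afshari--Rathjen well-ordering principle for $\varepsilon$-numbers~\cite{rathjen-afshari}, which is equivalent over $\rca_0$ to arithmetical recursion along~$\mathbb N$. So it suffices to show that $\varepsilon_\alpha$ is well founded for every well order~$\alpha$, under the assumption that $\three$ is $\bqo$. Since $\prec$ is linear on~$\varepsilon_\alpha$, it will be enough to show that $\varepsilon_\alpha$ is $\bqo$ (or even merely $\wpo$).

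First I would fix a well order~$\alpha$ and, without loss of generality, assume that $\alpha$ has a smallest element (otherwise replace $\alpha$ by $1+\alpha$ and observe that $\varepsilon_\alpha$ embeds order-reflectingly into $\varepsilon_{1+\alpha}$, so well foundedness of the latter implies well foundedness of the former). The case $\alpha=\emptyset$ is trivial, so I also assume $\alpha$ is nonempty. Setting $\Omega:=\omega^2\cdot\alpha$, which is a well order as noted just before Lemma~\ref{lem:j-exp-depth}, I would define a map
\begin{equation*}
h:(\omega^2\cdot\alpha)\oplus\one\to\Omega\oplus\Omega
\end{equation*}
by $h(\overline q):=\langle 0,q\rangle$ for $q\in\omega^2\cdot\alpha$ and $h(\star):=\langle 1,q_0\rangle$ for some fixed $q_0\in\Omega$. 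Because the two summands of an $\oplus$-sum are incomparable, this map is both order preserving and order reflecting.

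Next, assuming $\three$ is $\bqo$, Theorem~\ref{thm:Omega+Omega_bqo} yields that $\Omega\oplus\Omega$ is $\bqo$. Any bad array into $(\omega^2\cdot\alpha)\oplus\one$ would compose with~$h$ to give a bad array into $\Omega\oplus\Omega$, so $(\omega^2\cdot\alpha)\oplus\one$ is $\bqo$. By Theorem~\ref{thm:H_f}, the same holds for $H_f((\omega^2\cdot\alpha)\oplus\one)$. Finally, the preceding theorem provides an order reflecting map $\mathfrak j:\varepsilon_\alpha\to H_f((\omega^2\cdot\alpha)\oplus\one)$, from which I conclude that $\varepsilon_\alpha$ is $\bqo$ and, being linear, well founded.

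There is no real obstacle: all the technical work has already been absorbed into the three preceding theorems, and the corollary is just the glue that joins them to the well-ordering-principle characterization of $\aca_0^+$. The only small point to check is the reduction to the case that $\alpha$ has a minimum element (needed for~$\mathfrak j$) and the verification that $(\omega^2\cdot\alpha)\oplus\one$ embeds into a form $\Omega\oplus\Omega$ compatible with Theorem~\ref{thm:Omega+Omega_bqo}; both are immediate.
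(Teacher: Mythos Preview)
Your proposal is correct and follows essentially the same route as the paper: reduce to the well-ordering principle for~$\varepsilon$-numbers, observe that $(\omega^2\cdot\alpha)\oplus\one$ embeds order-reflectingly into $\Omega\oplus\Omega$ for $\Omega=\omega^2\cdot\alpha$, and then chain Theorems~\ref{thm:Omega+Omega_bqo}, \ref{thm:H_f} and the order-reflecting map~$\mathfrak j$. The only cosmetic difference is that the paper handles the minimum-element issue by noting that a nonempty well order automatically has one (so the $1+\alpha$ detour is unnecessary), and it also cites Marcone--Montalb\'an alongside Afshari--Rathjen for the well-ordering characterization of~$\aca_0^+$.
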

\begin{proof}
Arithmetical recursion along~$\mathbb N$ is equivalent to the statement that $\varepsilon_\Omega$ is well founded for any well order~$\Omega$, as shown by Marcone and Montalb\'an~\cite{marcone-montalban} as well as Afshari and Rathjen~\cite{rathjen-afshari} (by different methods from computability and proof theory). As $\varepsilon_0$ embeds into $\varepsilon_1$, it suffices to consider a well order $\Omega$ that is non-empty. The latter will thus have a smallest element, so that the previous considerations apply. Given that $\three$ is $\bqo$, we can infer that the same holds for $H_f((\omega^2\cdot\Omega)\oplus\one)$, by Theorems~\ref{thm:Omega+Omega_bqo} and~\ref{thm:H_f}. Due to the previous theorem, it follows that $\varepsilon_\Omega$ is well founded, as required.
\end{proof}

In the following, we prove a lower bound on the strength of the statement that the order $\three$ is $\delbqo$. For this purpose, we consider the finite $Q$-labelled trees with two order relations that were introduced by Montalb\'an~\cite{montalban-jullien,montalban-fraisse}.

\begin{definition}
For a quasi order~$Q$, let $T_f(Q)$ be the collection of finite ordered trees with vertex labels from~$Q$, as generated by the following recursive clause:
\begin{itemize}
\item given any $q\in Q$ and previously constructed $b_i\in T_f(Q)$ for $i<n$ (possibly with $n=0$), we add a new element $q\star\langle b_0,\ldots,b_{n-1}\rangle\in T_f(Q)$.
\end{itemize}
For $a=p\star\langle a_0,\ldots,a_{m-1}\rangle$ and $b=q\star\langle b_0,\ldots,b_{n-1}\rangle$ in $T_f(Q)$, we recursively declare
\begin{alignat*}{3}
a&\leq^s{}&& b\quad&&\Leftrightarrow\quad\begin{cases}
\text{$p\leq_Q q$ and for each $i<m$ there is $j<n$ with $a_i\leq^s b_j$},\\
\text{or $a\leq^s b_j$ for some~$j<n$},
\end{cases}\\
a&\leq^w{} &&b\quad&&\Leftrightarrow\quad\begin{cases}
\text{$p\leq_Q q$ and $a_i\leq^w b$ for all $i<m$},\\
\text{or $a\leq^w b_j$ for some~$j<n$}.
\end{cases}
\end{alignat*}
We write $T_f^s(Q)$ and $T_f^w(Q)$ for $T_f(Q)$ with the relation $\leq^s$ and $\leq^w$, respectively.
\end{definition}

A straightforward induction shows that we have $a\leq^s b$ or $a\leq^w b$, respectively, if any only if there is a homomorphism or weak homo\-morphism $a\to b$ in the sense of Montalb\'an~\cite{montalban-fraisse}. Let us point out that homomorphisms map proper subtrees into proper subtrees but need not be injective. In our definition, this corresponds to the fact that the same $j<n$ may witness $a_i\leq^s b_j$ for different~$i<m$. Further inductions confirm that $\leq^s$ and $\leq^w$ are quasi orders. In particular, we can invoke reflexivity to get $b_j\leq^s q\star\langle b_0,\ldots,b_{n-1}\rangle$ for $j<n$. Hence the condition $a_i\leq^s b_j$ is stronger than $a_i\leq^s b$, which inductively shows that $a\leq^s b$ implies $a\leq^w b$. In the converse direction, we have the following result of Montalb\'an (which was originally proved for the more general case of infinite trees). We recall that $Q\oplus\one$ denotes the extension of $Q$ by a new element that is incomparable with any other.

\begin{lemma}[$\rca_0$; \cite{montalban-fraisse}]\label{lem:strong-weak-trees}
There is an order embedding of $T_f^s(Q)$ into $T_f^w(Q\oplus\one)$, for any quasi order~$Q$.
\end{lemma}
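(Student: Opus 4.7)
The plan is to realise the embedding by inserting, between each node of $a\in T_f^s(Q)$ and its children, a fresh \emph{marker node} labelled by the new element $\bullet$ of $\one$. Formally, define $\phi:T_f^s(Q)\to T_f^w(Q\oplus\one)$ recursively by
\[
\phi(p\star\langle a_0,\ldots,a_{m-1}\rangle):=p\star\langle\bullet\star\langle\phi(a_0)\rangle,\ldots,\bullet\star\langle\phi(a_{m-1})\rangle\rangle.
\]
The point is that labels from $Q$ and the label $\bullet$ alternate as one descends $\phi(a)$, and $\bullet$ is incomparable in $Q\oplus\one$ with every element of $Q$; hence in any witness of $\phi(a)\leq^w\phi(b)$ the root-comparison disjunct of $\leq^w$ can only fire when the two roots sit in the same layer.

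Preservation, $a\leq^s b\Rightarrow\phi(a)\leq^w\phi(b)$, is a straightforward induction on the sum of sizes of $a$ and $b$. If $p\leq_Q q$ and each $a_i\leq^s b_{j(i)}$, the induction hypothesis gives $\phi(a_i)\leq^w\phi(b_{j(i)})$, whence $\bullet\star\langle\phi(a_i)\rangle\leq^w\bullet\star\langle\phi(b_{j(i)})\rangle$ at the $\bullet$-layer, and then $\phi(a)\leq^w\phi(b)$ at the $Q$-layer; the second disjunct of $\leq^s$ is absorbed by applying the second disjunct of $\leq^w$ twice to descend past both the marker $\bullet\star\langle\phi(b_j)\rangle$ and its unique successor $\phi(b_j)$. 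Reflection, $\phi(a)\leq^w\phi(b)\Rightarrow a\leq^s b$, again proceeds by induction on the sum of sizes. Writing $a=p\star\langle a_i\rangle_{i<m}$ and $b=q\star\langle b_j\rangle_{j<n}$: if the witness descends into a child of $\phi(b)$, we obtain $\phi(a)\leq^w\bullet\star\langle\phi(b_j)\rangle$ for some $j<n$, and since $p\in Q$ is incomparable with $\bullet$ this must reduce further to $\phi(a)\leq^w\phi(b_j)$, giving $a\leq^s b_j$ by the induction hypothesis. Otherwise $p\leq_Q q$ and each child $\bullet\star\langle\phi(a_i)\rangle\leq^w\phi(b)$; since $\bullet\not\leq q$, each such inequality descends to some $\bullet\star\langle\phi(b_j)\rangle$, and a further analysis of $\bullet\star\langle\phi(a_i)\rangle\leq^w\bullet\star\langle\phi(b_j)\rangle$ either terminates with $\phi(a_i)\leq^w\phi(b_j)$ (via the first disjunct at the $\bullet$-layer followed by a $Q$-versus-$\bullet$ mismatch) or sends us to $\bullet\star\langle\phi(a_i)\rangle\leq^w\bullet\star\langle\phi(b_{j,k})\rangle$ for some grandchild $b_{j,k}$ of $b$, where the analysis iterates. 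Finiteness of $b$ forces termination with $\phi(a_i)\leq^w\phi(b')$ for some subtree $b'$ of $b_j$, and the induction hypothesis yields $a_i\leq^s b'\leq^s b_j$. Combining with $p\leq_Q q$ gives $a\leq^s b$.

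The main obstacle is the bookkeeping in the reflection step: each appeal to the second disjunct of $\leq^w$ can drive the analysis another level deeper into $b$, so the argument must trace an \emph{a priori} unbounded (but, by finiteness of $b$, necessarily finite) chain of alternating $Q$- and $\bullet$-layers. The invariant that tames this chain is the pairwise incomparability of $\bullet$ with the elements of $Q$ in $Q\oplus\one$, which at each layer rules out the root-comparison disjunct and forces a controlled descent into a strictly smaller subtree; the elementary fact that $c\leq^s d$ whenever $c$ is a subtree of $d$ then converts the eventually produced embedding into a direct child of $b$, closing the induction.
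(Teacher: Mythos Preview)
The paper does not supply its own proof of this lemma; it simply attributes the result to Montalb\'an~\cite{montalban-fraisse}. Your construction---inserting a node with the fresh label~$\bullet$ between every parent and child---is exactly the standard one from that reference, and your verification of both preservation and reflection is correct. The only point that deserves a little more care in a formal write-up is the inner iteration in the reflection step: rather than describing it as an unbounded chase through the layers of~$\phi(b)$, it is cleaner to prove the auxiliary claim ``$\bullet\star\langle\phi(a_i)\rangle\leq^w\bullet\star\langle\phi(c)\rangle$ implies $a_i\leq^s c$'' by a separate induction on the size of~$c$, which makes the termination and the appeal to the outer induction hypothesis transparent in~$\rca_0$.
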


We will combine the lemma with the following proposition, which follows from work of H.~Friedman, A.~Montalb\'an and A.~Weiermann. This work builds on a previous result of Friedman, which characterizes~$\mathsf{ATR}_0$ in terms of orders~$\varphi_\Omega 0$ that are related to the Veblen hierarchy. The proposition is derived by showing that $\varphi_\Omega0$ admits an order reflecting map into~$T_f^s(\Omega\oplus\mathbf 2)$, where $\mathbf 2$ is the discrete order with two elements. We note that Marcone and Montalb\'an~\cite{marcone-montalban-hausdorff} have constructed an order reflecting function from $\varphi_20$ into~$T_f^s(\mathbf 2)$.

\begin{proposition}[$\rca_0$; \cite{friedman-montalban-weiermann}]\label{prop:atr-montalban-trees}
Arithmetical transfinite recursion follows from the statement that $T_f^s(\Omega\oplus\mathbf 2)$ is a well quasi order for any well order~$\Omega$.
\end{proposition}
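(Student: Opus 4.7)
The plan is to split the proposition into two inputs from the literature: a characterization of $\mathsf{ATR}_0$ in terms of Veblen-style ordinal notation systems, and an order-reflecting embedding of such notations into the strong tree order. Concretely, I would first invoke the theorem of H.~Friedman (cf.\ also~\cite{rathjen-weiermann-atr,marcone-montalban}) that, over~$\rca_0$, arithmetical transfinite recursion is equivalent to the well-foundedness of $\varphi_\Omega 0$ for every well order~$\Omega$, where $\varphi_\Omega 0$ is the notation system built from the binary Veblen function with its first argument drawn from~$\Omega$ and its Cantor-style sums in normal form. So for a fixed well order~$\Omega$, it is enough to derive the well-foundedness of $\varphi_\Omega 0$ from the hypothesis that $T_f^s(\Omega\oplus\mathbf 2)$ is a well quasi order.

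The second input, due to Friedman, Montalb\'an and Weiermann~\cite{friedman-montalban-weiermann}, is the construction of an order-reflecting map
\[
h:\varphi_\Omega 0\longrightarrow T_f^s(\Omega\oplus\mathbf 2).
\]
I would define $h$ by recursion on the build-up of $\varphi_\Omega 0$-terms, using the two labels of the summand~$\mathbf 2$ as syntactic markers to distinguish the two combinatorial roles in a Veblen term, namely the formation of a finite sum in Cantor-like normal form and the application of the Veblen function, while the labels in~$\Omega$ carry the Veblen index itself. A term of the shape $\varphi_\alpha\beta$ is then mapped to a tree whose root carries one marker from~$\mathbf 2$, with children $h(\beta)$ and a leaf labelled~$\alpha\in\Omega$; a term that is a sum $\gamma_0+\dots+\gamma_{k-1}$ in normal form is mapped to a tree whose root carries the other marker from~$\mathbf 2$ and whose children are $h(\gamma_0),\dots,h(\gamma_{k-1})$ in order. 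This refines the Marcone--Montalb\'an embedding $\varphi_2 0\to T_f^s(\mathbf 2)$ recalled in the excerpt by letting the first Veblen argument vary over~$\Omega$ rather than being fixed at~$2$.

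The principal obstacle is the verification that $h$ is order reflecting with respect to~$\leq^s$. The natural strategy is simultaneous induction on the two input terms $s,t\in\varphi_\Omega 0$, following the clauses in the definition of~$\leq^s$ and the normal form conventions on $\varphi_\Omega 0$-terms, and reducing an assumed inequality $h(s)\leq^s h(t)$ to inequalities between immediate subterms. The crucial technical point is the incomparability between the summands $\Omega$ and~$\mathbf 2$ inside $\Omega\oplus\mathbf 2$, together with their internal rigidity: it prevents any weak \emph{or} strong homomorphism between $h$-images from matching a Veblen-index label in~$\Omega$ with a marker from~$\mathbf 2$, or from matching the two markers with each other, and thereby forces such a homomorphism to respect the recursive syntactic structure of $\varphi_\Omega 0$-terms. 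Once this reflection is established, the proof concludes immediately: from the hypothesis we obtain that $T_f^s(\Omega\oplus\mathbf 2)$ is well quasi ordered, hence a fortiori well founded; the order-reflecting map~$h$ transports this back to $\varphi_\Omega 0$; and Friedman's theorem then delivers arithmetical transfinite recursion.
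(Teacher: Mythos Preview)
Your proposal is correct and follows exactly the approach the paper itself describes: the paper does not give a self-contained proof of this proposition but cites it from~\cite{friedman-montalban-weiermann}, explaining in the preceding paragraph that it is obtained by combining Friedman's characterization of $\mathsf{ATR}_0$ via well-foundedness of~$\varphi_\Omega 0$ with an order-reflecting map $\varphi_\Omega 0\to T_f^s(\Omega\oplus\mathbf 2)$, and noting the Marcone--Montalb\'an precursor for $\varphi_2 0\to T_f^s(\mathbf 2)$. Your sketch of the map~$h$ and the inductive verification of order reflection is a plausible reconstruction of that cited argument; the paper offers no further detail to compare against.
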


To obtain a bound on the statement that $\three$ is $\delbqo$, we will combine the cited results with the following theorem. As noted above, Montalb\'an~\cite{montalban-fraisse} has shown that the corresponding theorem for infinite trees holds over $\mathsf{ATR}_0$.

\begin{theorem}[$\rca_0$]
If $Q$ is~$\delbqo$, then $T_f^w(Q)$ is~$\bqo$.
\end{theorem}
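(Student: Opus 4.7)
The plan is to argue the contrapositive: given a bad array $f:B\to T_f^w(Q)$ on a barrier~$B$, I will produce a bad $\Delta^0_2$ function~$G:[\mathbb N]^\omega\to Q$, contradicting that $Q$ is $\delbqo$. Using~\cite[Lemma~1.6]{marcone-survey-old} to reduce to a barrier with base~$\mathbb N$, and the barrier-to-continuous correspondence recalled before Corollary~\ref{cor:three-aca}, I obtain a bad continuous, hence $\Delta^0_2$, function $F_0:[\mathbb N]^\omega\to T_f^w(Q)$.

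The iteration mimics the $\delbqo$-variant of Theorem~\ref{thm:Omega+Omega_bqo}. Define a descent function $\mathfrak x:T_f^w(Q)^2\to T_f^w(Q)$ by
\begin{equation*}
\mathfrak x(a,b):=\begin{cases}a_i & \text{if $a=p\star\langle a_0,\ldots,a_{m-1}\rangle$ and $i<m$ is least with $a_i\not\leq^w b$ (\emph{case~C}),}\\ a & \text{otherwise (\emph{case~L}),}\end{cases}
\end{equation*}
and iterate $F_{n+1}(X):=\mathfrak x(F_n(X),F_n(X^-))$. Each $F_n$ remains continuous, and a $\Pi^0_1$-induction on~$n$ with $X$ fixed and $i$ varying establishes $F_n(X^{-i})\not\leq^w F_n(X^{-(i+1)})$ for all~$i$. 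The step splits into the four subcases CC, CL, LC, LL: the subcases ending in~C use the defining property of $\mathfrak x$ against the opponent directly, while the pivotal LC-subcase relies on the fact that $a\not\leq^w b$ automatically yields $a\not\leq^w b_j$ for each child $b_j$ of~$b$, since $a\leq^w b_j$ would propagate to $a\leq^w b$ through the second disjunct of the definition of~$\leq^w$.

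Case~C strictly lowers $\hth$ and case~L preserves it, so the standard $\rca_0$-fact that a non-increasing sequence of natural numbers stabilizes produces a value $F_\infty(X)$ definable by either of the equivalent formulas ``$\exists N\,\forall n\geq N\,(F_n(X)=a)$'' and ``$\forall N\,\exists n\geq N\,(F_n(X)=a)$''. The root label $G(X)$ of $F_\infty(X)$ is therefore $\Delta^0_2$. Taking $n$ beyond the stabilization points of both $X$ and $X^-$ gives $F_{n+1}(X)=F_n(X)$, which forces case~L at $(F_\infty(X),F_\infty(X^-))$; combined with the stage-$n$ instance of the inductive claim, this leaves root-label incomparability as the only possible reason for $F_\infty(X)\not\leq^w F_\infty(X^-)$, whence $G(X)\not\leq_Q G(X^-)$ and $G$ is bad. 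The main obstacle is the choice of~$\mathfrak x$: the tempting case-L value $p\star\langle\rangle$ (the leaf labelled by the root of~$a$) breaks badness at the next step, because the label~$p$ may still embed weakly into a proper subtree of the opponent even while being incomparable to its root label; keeping $\mathfrak x(a,b)=a$ in case~L and relying on the height-bounded descent of case~C for termination, together with the automatic propagation of $\not\leq^w$ to sub-opponents that drives the LC-subcase, is what makes the argument run in~$\rca_0$.
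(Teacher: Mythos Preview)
Your proposal is correct and follows essentially the same approach as the paper. The only cosmetic difference is that you iterate on the level of continuous functions $F_n:[\mathbb N]^\omega\to T_f^w(Q)$ (as in the paper's $\delbqo$ variant of Theorem~\ref{thm:Omega+Omega_bqo}), whereas the paper defines the analogous $g:\mathcal B\to T_f^w(Q)$ on the iterated barriers $B^n$; your $F_n(X)$ is precisely the paper's $g(X\{n\})$, the stabilisation and root-label extraction coincide, and your four-case induction collapses to the paper's two cases once you use the blanket observation $g(t)\leq^w g(t^0)$ (your ``propagation of $\not\leq^w$ to sub-opponents'') instead of splitting on the opponent.
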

\begin{proof}
Aiming at the contrapositive, we assume that $f:B\to T_f^w(Q)$ is a bad array on some barrier~$B$. We adopt the definitions of $B^n$ and $\mathcal B$ as well as $s^0$ and $s^1$ from the proof of Theorem~\ref{thm:Omega+Omega_bqo}. In order to define a function $g:\mathcal B\to T_f^w(Q)$, we first put $g(s):=f(s)$ for $s\in B=B^1$. For a given $s\in B^{n+1}$ with $n>0$, we write $g(s^0)=q\star\langle a_0,\ldots,a_{n-1}\rangle$ for the value that is provided by recursion. We then put
\begin{equation*}
g(s)=\begin{cases}
a_{\min(I)} & \text{when $I:=\{i<n\,|\,a_i\not\leq^w g(s^1)\}$ is non-empty},\\
g(s^0) & \text{otherwise}.
\end{cases}
\end{equation*}
Let us record the following consequences of the definition:
\begin{enumerate}[label=(\roman*)]
\item we have $g(s)\leq^w g(s^0)$ for any $s\in\mathcal B\backslash B^1$,
\item given $g(s)=g(s^0)=q\star\langle a_0,\ldots,a_{n-1}\rangle$, we get $a_i\leq^w g(s^1)$ for all~$i<n$.
\end{enumerate}
We show that $s\tl t$ entails $g(s)\not\leq^w g(t)$ when we have $s,t\in B^n$ for a single~$n>0$. The case of $n=1$ is covered by the assumption that~$f$ is bad. For $n>1$, we distinguish cases according to the definition of~$g(s)$. If the latter is given by the first case above, then we get $g(s)\not\leq^w g(s^1)=g(t^0)$, where we have $s^1=t^0$ as in the proof of Theorem~\ref{thm:Omega+Omega_bqo}. To get $g(s)\not\leq^w g(t)$, we use~(i) with $t$ at the place~of~$s$. In the remaining case we have~$g(s)=g(s^0)$. Due to $s^0\tl s^1$, the induction~hypoth\-esis yields $g(s)\not\leq^w g(s^1)$, so that we can conclude as before. Let us now consider
\begin{equation*}
h:\mathcal B\to Q\quad\text{with}\quad h(s):=q\quad\text{for}\quad g(s)=q\star\langle a_0,\ldots,a_{n-1}\rangle.
\end{equation*}
As we have $g(s^0)\not\leq^w g(s^1)$ for $s\in\mathcal B\backslash B^1$, we get the following from~(ii) above:
\begin{enumerate}[label=(\roman*)]\setcounter{enumi}{2}
\item if we have $g(s)=g(s^0)$, then we obtain $h(s)\not\leq_Q h(s^1)$ .
\end{enumerate}
For any~$X\in[\mathbb N]^\omega$ and $n>0$, we have a unique $X\{n\}\sqsubset X$ with $X\{n\}\in B^n$ , as in the proof of Theorem~\ref{thm:Omega+Omega_bqo}. Let us recall that we get $X\{n+1\}^0=X\{n\}$ and similarly $X\{n+1\}^1=X^-\{n\}$. We claim that the $\Sigma^0_2$-condition
\begin{equation*}
F(X)=q\quad:\Leftrightarrow\quad\exists n>0\forall m\geq n.\,h(X\{m\})=q
\end{equation*}
defines the graph of a total function $F:[\mathbb N]^\omega\to Q$. Indeed, if $F(X)$ was undefined, then $h(X\{m\})$ and hence $g(X\{m\})$ would not stabilize. This would yield a strictly increasing sequence of indices $m_0<m_1<\ldots$ with
\begin{equation*}
g(X\{m_{i-1}\})=\ldots=g(X\{m_i-1\})\neq g(X\{m_i\}).
\end{equation*}
In view of $X\{m_i-1\}=X\{m_i\}^0$ and the definition of~$g$, the inequality would mean that $g(X\{m_i\})$ is a proper subtree of $g(X\{m_{i-1}\})$, which must fail for some~$i>0$. For sufficiently large~$m$, we can now use~(iii) to get
\begin{equation*}
F(X)=h(X\{m\})\not\leq_Q h(X\{m\}^1)=h(X^-\{m-1\})=F(X^-),
\end{equation*}
as needed to show that~$F$ is bad.
\end{proof}

To conclude, we derive the second main result of this paper.

\begin{corollary}[$\rca_0$]
The statement that $\three$ is $\delbqo$ entails arithmetical trans\-finite recursion (which is the central axiom of~$\mathsf{ATR}_0$).
\end{corollary}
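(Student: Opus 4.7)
The plan is to feed Proposition~\ref{prop:atr-montalban-trees} through the preceding theorem and Lemma~\ref{lem:strong-weak-trees}. Applied with $Q=\Omega\oplus\mathbf 2$, the lemma provides an order embedding of $T_f^s(\Omega\oplus\mathbf 2)$ into $T_f^w((\Omega\oplus\mathbf 2)\oplus\one)=T_f^w(\Omega\oplus\three)$, and the preceding theorem promotes $T_f^w(\Omega\oplus\three)$ to $\bqo$ once $\Omega\oplus\three$ is $\delbqo$. Passing $\delbqo$ backwards along an order reflecting embedding is immediate, since composition with the embedding turns a bad $\Delta^0_2$-function on the suborder into a bad $\Delta^0_2$-function on the ambient order. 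Everything thus reduces to realising $\Omega\oplus\three$ as an order reflecting suborder of some quasi order already shown to be $\delbqo$ in the present paper.

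The main obstacle is that $\Omega\oplus\three$ carries four pairwise incomparable components---the chain $\Omega$ together with three singletons---while the workhorse construction of Section~\ref{sect:ACA}, namely $\Omega'\oplus\Omega'$ for a well order $\Omega'$, only supplies two. The way out is the finite powerset: the proof of Corollary~\ref{cor:prod-delbqo} showed that $P_f(Q)$ inherits $\delbqo$ from $Q$, and the $\delbqo$-analogue of Theorem~\ref{thm:Omega+Omega_bqo} established in Section~\ref{sect:ACA} yields that $\Omega'\oplus\Omega'$ is $\delbqo$ for every well order $\Omega'$. I would set $\Omega':=\mathbf 3+\Omega$, so that $\Omega'$ contains a distinguished initial segment $\{0,1,2\}$ sitting strictly below a copy $\alpha\mapsto\alpha^*$ of $\Omega$, and embed $\Omega\oplus\three$ into $P_f(\Omega'\oplus\Omega')$ by sending the $\alpha$-th element of the first summand to the singleton $\{\langle 0,\alpha^*\rangle\}$ and the three points of the second summand to the anti-diagonal triple
\begin{equation*}
\{\langle 0,0\rangle,\langle 1,2\rangle\},\quad\{\langle 0,1\rangle,\langle 1,1\rangle\},\quad\{\langle 0,2\rangle,\langle 1,0\rangle\}.
\end{equation*}

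The remaining verification is routine. The three listed pairs form an antichain in $P_f(\Omega'\oplus\Omega')$ because their first- and second-summand coordinates are anti-monotone across the triple: within each summand one coordinate always blocks, and cross-summand comparisons fail outright. Each of the three pairs is incomparable to every singleton $\{\langle 0,\alpha^*\rangle\}$, since in one direction the $\langle 1,\cdot\rangle$-entry of the pair has no match inside a first-summand singleton, while in the other the entries $0,1,2$ of $\Omega'$ are strictly smaller than $\alpha^*\geq 3$. With $\Omega\oplus\three$ thereby $\delbqo$, the preceding theorem, Lemma~\ref{lem:strong-weak-trees} and Proposition~\ref{prop:atr-montalban-trees} chain together to deliver arithmetical transfinite recursion. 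The conceptual point worth recording is that one really does need to leave the two-summand world of Theorem~\ref{thm:Omega+Omega_bqo}: a pigeonhole on summands shows that no order reflecting map of $\Omega\oplus\three$ into any $\Omega''\oplus\Omega''$ can exist, and $P_f$ is the lightest operation that both manufactures the required fourth independent component and preserves $\delbqo$.
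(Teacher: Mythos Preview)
Your proof is correct and follows essentially the same route as the paper: both reduce via Lemma~\ref{lem:strong-weak-trees}, the preceding theorem, and Proposition~\ref{prop:atr-montalban-trees} to showing that $\Omega\oplus\three$ is $\delbqo$, and both handle this by passing through a finite powerset of a two-summand order. The only cosmetic difference is that the paper packages the last step via Corollary~\ref{cor:prod-delbqo}, embedding $\Omega\oplus\three$ into $\Omega\times\Omega$ (assuming $\Omega$ has at least four elements), whereas you inline the ingredients of that corollary and construct a direct embedding into $P_f(\Omega'\oplus\Omega')$.
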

\begin{proof}
Due to Lemma~\ref{lem:strong-weak-trees} and Proposition~\ref{prop:atr-montalban-trees}, it suffices to derive that $T_f^w(\Omega\oplus\three)$ is a well partial order for any well order~$\Omega$. In view of the previous theorem, this reduces to the claim that~$\Omega\oplus\three$ is $\delbqo$. It is enough to show that the same holds for $\Omega\times\Omega$, as we may assume that~$\Omega$ has at least four elements. By Corollary~\ref{cor:prod-delbqo}, we can finally reduce to the assumption that $\three$ is $\delbqo$.
\end{proof}

\bibliographystyle{amsplain}
\bibliography{3-bqo}

\end{document}